\documentclass[11pt]{amsart}   	
\usepackage{geometry}             
\usepackage[dvipsnames]{xcolor}   		             		
\usepackage{graphicx}				
\usepackage{amssymb,mathrsfs}
\usepackage{amsthm,amsmath,stmaryrd}
\usepackage{tikz}
\usepackage{tikz-cd}
\usepackage{accents,upgreek,enumerate}
\usepackage[headings]{fullpage}
\usepackage{bm}
\usepackage{mathtools}
\usepackage[all]{xy}
\usepackage{caption}
\usepackage[euler-digits]{eulervm}
\usepackage[normalem]{ulem}

\tikzset{
  commutative diagrams/.cd, 
  arrow style=tikz, 
  diagrams={>=stealth}
}


\newenvironment{customthm}[1]
  {\innercustomthm}
  {\endinnercustomthm}

  \makeatletter
\def\@tocline#1#2#3#4#5#6#7{\relax
  \ifnum #1>\c@tocdepth 
  \else
    \par \addpenalty\@secpenalty\addvspace{#2}%
    \begingroup \hyphenpenalty\@M
    \@ifempty{#4}{%
      \@tempdima\csname r@tocindent\number#1\endcsname\relax
    }{%
      \@tempdima#4\relax
    }%
    \parindent\z@ \leftskip#3\relax \advance\leftskip\@tempdima\relax
    \rightskip\@pnumwidth plus4em \parfillskip-\@pnumwidth
    #5\leavevmode\hskip-\@tempdima
      \ifcase #1
       \or\or \hskip 1em \or \hskip 2em \else \hskip 3em \fi%
      #6\nobreak\relax
    \dotfill\hbox to\@pnumwidth{\@tocpagenum{#7}}\par
    \nobreak
    \endgroup
  \fi}
\makeatother

\usetikzlibrary{calc}
\usetikzlibrary{fadings}
\usetikzlibrary{decorations.pathmorphing}
\usetikzlibrary{decorations.pathreplacing}

\newcounter{marginnote}
\setcounter{marginnote}{0}

\setlength{\parskip}{4pt}

\DeclareMathAlphabet{\mathpzc}{OT1}{pzc}{m}{it}

\usepackage[backref=page]{hyperref}
\hypersetup{
  colorlinks   = true,          
  urlcolor     = violet,          
  linkcolor    = blue,          
  citecolor   = violet             
}

\newtheorem{theorem}{Theorem}[subsection]
\newtheorem{corollary}[theorem]{Corollary}
\newtheorem{lemma}[theorem]{Lemma}
\newtheorem{proposition}[theorem]{Proposition}

\newtheorem{quasi-theorem}[theorem]{Quasi-Theorem}

\theoremstyle{definition}
\newtheorem{definition}[theorem]{Definition}

\newtheorem{remark}[theorem]{Remark}
\newtheorem{questions}{Question}

\newtheorem{example}[theorem]{Example}
\newtheorem{blank remark}[theorem]{}

\newtheorem{not1}[theorem]{Notation}


\newcommand{\PP}{\mathbb{P}}         
\newcommand{\QQ} {{\mathbb Q}}		
\newcommand{\RR} {{\mathbb R}}		
\newcommand{\ZZ} {{\mathbb Z}}		




\DeclareMathOperator{\ch}{ch}
\DeclareMathOperator{\Td}{Td}





\newcommand{\cal}{\mathcal}

\def\cA{{\cal A}}

\def\cK{{\mathsf K}}

\def\cM{{\cal M}}


\def\fM{\mathfrak{M}}



\newcommand{\plC}{\scalebox{0.8}[1.3]{$\sqsubset$}}



\newcommand{\Mbar}{\overline{\cM}\vphantom{\cM}}





\def\ev{{\mathrm{ev}}}



\makeatletter
\def\blfootnote{\xdef\@thefnmark{}\@footnotetext}
\makeatother

\title{Gromov--Witten theory and invariants of matroids}

\date{}

\author{Dhruv Ranganathan {\it \&} Jeremy Usatine}

\address{Dhruv Ranganathan \\ Department of Pure Mathematics {\it \&} Mathematical Statistics\\
University of Cambridge, Cambridge, UK}
\email{\href{mailto:dr508@cam.ac.uk}{dr508@cam.ac.uk}}

\address{Jeremy Usatine \\ Department of Mathematics\\
Brown University, Providence, USA}
\email{\href{mailto:jeremy_usatine@brown.edu}{jeremy\_usatine@brown.edu}}

\begin{document}

\begin{abstract}
We use techniques from Gromov--Witten theory to construct new invariants of matroids taking value in the Chow groups of spaces of rational curves in the permutohedral toric variety. When the matroid is realizable by a complex hyperplane arrangement, our invariants coincide with virtual fundamental classes used to define the logarithmic Gromov--Witten theory of wonderful models of arrangement complements, for any logarithmic structure supported on the wonderful boundary. When the boundary is empty, this implies that the quantum cohomology ring of a hyperplane arrangement's wonderful model is a combinatorial invariant, i.e., it depends only on the matroid. When the boundary divisor is maximal, we use toric intersection theory to convert the virtual fundamental class into a balanced weighted fan in a vector space, having the expected dimension. We explain how the associated Gromov--Witten theory is completely encoded by intersections with this weighted fan. We include a number of questions whose positive answers would lead to a well-defined Gromov--Witten theory of non-realizable matroids.
\end{abstract}
\maketitle


\section{Introduction}

The purpose of this article is to use the logarithmic Gromov--Witten theory of wonderful compactifications to construct new invariants of matroids. Our main results are the construction of a canonical quantum deformation of the Chow ring of a realizable matroid and a tropical correspondence theorem for virtual counts of logarithmic rational curves on arrangement complements. 

\subsection{Wonderful geometry of the permutohedron} The theory of matroids is intimately related to the toric variety associated to the permutohedron. A geometric development of the basic theory of matroids and this relationship can be found in the article of Katz~\cite{Kat16}. 

A loop-free matroid $M$ of rank $d+1$ on $\{0, \dots, n\}$ determines a $d$-dimensional unimodular fan $\Delta_M$ in the vector space $\RR^{n+1}/\RR(1, \dots, 1)$, called the projective Bergman fan\footnote{The Bergman fans in this paper will be taken with respect to the maximal building set, see~\cite{dCP95,FY04}.} of $M$. We set $\Delta_n = \Delta_{B_{n+1}}$, where the Boolean matroid $B_{n+1}$ is the uniform rank $n+1$ matroid on $\{0, \dots, n\}$. Concretely, the fan $\Delta_n$ is the normal fan to the permutohedron,  the first barycentric subdivision of the fan of $\PP^n$, and $\Delta_M$ is a subfan of $\Delta_n$.

Suppose $\cA$ is a hyperplane arrangement in $\mathbb{P}^d$ realizing $M$. By linear algebra, there is an essentially unique inclusion $\mathbb{P}^d \hookrightarrow \mathbb{P}^n$ such that $\cA$ consists of the restriction to $\mathbb{P}^d$ of the coordinate hyperplanes. A famous construction of de Concini--Procesi constructs a modification of $\PP^d$
$$\iota: W_\cA \hookrightarrow X(\Delta_n),$$
as follows~\cite{dCP95}.  The variety $X(\Delta_n)$ is the toric variety associated to the fan $\Delta_n$, and explicitly, can be recognized as the blowup of $\mathbb P^n$ along coordinate strata in increasing order of dimension. The variety $W_\cA$ is the strict transform of $\mathbb P^d$ and is called the wonderful model. The key property of the construction is that the restriction of the toric boundary $\partial X(\Delta_n)$ is a simple normal crossings divisor on $W_\cA$. We write $\partial W_\cA$ for this divisor. 

Let $D_X\subset \partial X(\Delta_n)$ be any union of toric boundary divisors and let $D_{ W}\subset\partial W_\cA$ be the intersection with $ W_{\cA}$. Logarithmic Gromov--Witten theory probes the geometry of a simple normal crossings pairs by maps from curves with tangency along $D_X$. The central object for us is the moduli space of genus $0$ logarithmic stable maps $\mathsf K_\Gamma(W_{\mathcal A})$ and the corresponding space $\mathsf K_\Gamma(X(\Delta_n))$. The symbol $\Gamma$ encodes the \textit{numerical data} of the Gromov--Witten problem, which selects (i) the divisor $D_X$, (ii) the curve class, (iii) the tangency data, see Section~\ref{sec: discrete-data}.

\subsection{Main results} In the main body, we construct a Chow homology class
\[
[M]^{{\mathrm{vir}}}_\Gamma \ \ \textrm{in } A_\star\mathsf K_\Gamma(X(\Delta_n)).
\]
associated to any, possibly non-realizable, matroid and a choice of numerical data $\Gamma$. Our first result is that when the matroid $M$ is realizable over the complex numbers, this class coincides with a familiar class from Gromov--Witten theory. We preserve the notation above for the statement of the theorem. 

\begin{customthm}{A}
\label{VirtualFundamentalClassOfMatroidIsVirtualFundamentalClassOfArrangement}
Let $M$ be a rank $d+1$ matroid on $\{0, \dots, n\}$, let $\cA$ be a complex hyperplane arrangement in $\mathbb{P}^d$ realizing $M$, and let $W_\cA$ be the wonderful model with respect to the maximal building set of $\cA$. Let $\iota: W_\cA \hookrightarrow X(\Delta_n)$ is the inclusion of the wonderful model into the permutohedral toric variety, and
\[
\cK_\Gamma(\iota): \cK_\Gamma(W_\cA) \hookrightarrow \cK_\Gamma(X(\Delta_n))
\]
the induced morphism of stable mapping spaces, obtained by composing a stable map with the inclusion $\iota$. Then
\[
	\cK_\Gamma(\iota)_\star[\cK_\Gamma(W_\cA)]^{\mathrm{vir}} = [M]^{{\mathrm{vir}}}_\Gamma
\]
for any choice of numerical data $\Gamma$.
\end{customthm}

It is useful to place this result in the context of other household invariants of arrangement complements. For example, the fundamental group of the complement of an arrangement is \textit{not} a combinatorial invariant~\cite{Ryb11}. The combinatorial invariance is also non-obvious from the point of view of Gromov--Witten theory which is \textit{deformation} {invariant} by construction. The space of arrangements realizing a given matroid, its {realization space}, can be highly disconnected, and therefore, the result goes significantly beyond the deformation invariance of Gromov--Witten theory.  

The class $[M]^{{\mathrm{vir}}}_\Gamma$ can be taken as the definition of the logarithmic Gromov--Witten virtual fundamental class when $M$ does not have a realization, after pushforward to the space of maps to $X(\Delta_n)$. However, it is not immediate that the Gromov--Witten invariants of $M$ are well defined, see Section~\ref{sec: questions}. 

A straightforward consequence of the result is that the quantum cohomology of the wonderful model of a complex arrangement is a matroid invariant, giving a quantum deformation of the result of Feichtner and Yuzvinsky~\cite{FY04}. In the statement of the next theorem, fix a polarization of $X(\Delta_n)$ and give $W_{\cA_1}$ and $W_{\cA_2}$ the polarizations induced by their inclusions into $X(\Delta_n)$. We consider the quantum cohomology with respect to these polarizations.

\begin{customthm}{B}
\label{QuantumCohomologyIsCombinatorialInvariant}
Let $\cA_1, \cA_2$ be hyperplane arrangements in $\mathbb{P}^d$, and let $W_{\cA_1}, W_{\cA_2}$ be their wonderful models with respect to the maximal building set, respectively. If $\cA_1$ and $\cA_2$ have the same underlying matroid, then
\[
	QH^\star(W_{\cA_1}) \cong QH^\star(W_{\cA_2}).
\]
\end{customthm}

When the boundary $D_X$ is the toric boundary, the theory becomes combinatorial. Fixing this logarithmic structure, there is an associated tropical space $\mathsf T_\Gamma(\Delta_n)$ parameterizing tropical rational curves with a balanced map to $\Delta_n$. Consider the subcomplex $\mathsf T_\Gamma(\Delta_M)$ of curves in $\Delta_M$. Note that this is typically not pure dimensional. In Section~\ref{sec: virtual-mw}, we construct a balanced weighted subfan $c_\Gamma(M)$ of $\mathsf T_\Gamma(\Delta_n)$ of the expected dimension. We show that $c_\Gamma(M)$ completely controls the Gromov--Witten theory of $M$. 

\begin{customthm}{C}[Tropical virtual fundamental class]
If $M$ admits a realization by a complex hyperplane arrangement with wonderful model $W$, then the subcomplex $c_\Gamma(M)$ is a union of cones of $\mathsf T_\Gamma(\Delta_M)$ of the expected dimension. Moreover, the logarithmic Gromov--Witten theory of $(W,\partial W)$ with numerical data $\Gamma$ can be uniquely reconstructed from the weighted balanced fan $c_\Gamma(M)$. 
\end{customthm}

The reconstruction procedure is explained in Section~\ref{sec: reconstruction}. First examples of the object $c_\Gamma(M)$ are given in Section~\ref{sec: examples}. The result generalizes the correspondence theorem for rational curves in toric varieties, proved at the level of moduli spaces in~\cite{CMR14b,R15b}. 

The virtual fundamental classes of matroids have properties that are reminiscent of familiar matroid invariants. Given a subdivision of the matroid polytope of $M$, the virtual fundamental classes of the matroid $M$ and the initial matroids of the subdivision are related by the degeneration formula~\cite{R19}. Similarly, the virtual fundamental classes of a direct sum of matroids can be related to those of the factors~\cite{R19b}. We expect these will be important properties in the further study of matroid Gromov--Witten theory. It is important to note, however, that the virtual fundamental classes are neither valuative nor satisfy a product rule for direct sums in the traditional sense. 

When $\partial W$ is the full wonderful boundary, the virtual fundamental class has an additional pleasant property. We have constructed matroid virtual fundamental classes using the wonderful compactification with respect to the maximal building set, in the sense of~\cite{dCP95}, however, when $\partial W$ is the wonderful boundary, the birational invariance of the logarithmic Gromov--Witten classes implies that the complex $c_\Gamma(M)$ depends only on the support of $\Delta_M$ in $\Delta_n$, rather than on the additional data of its fan structure~\cite{AW}. 

\subsection{Previous work} Our work is inspired by interactions between matroid theory and algebraic geometry. The Chow ring of a matroid was introduced by Feichtner and Yuzvinsky, as the Chow ring of a certain non-complete toric variety, building on work of de Concini and Procesi~\cite{dCP95,FY04}. The matroid Chow ring has many of the properties of the cohomology ring of a smooth projective algebraic variety, as was shown in work of Adiprasito, Huh, and Katz~\cite{AHK18}. Our results show that in the realizable case, the cohomology ring admits a canonical quantum deformation, and we believe that such a deformation should always exist, see Section~\ref{sec: questions}. 

The purely combinatorial matroid virtual fundamental class $c_\Gamma(M)$ should be compared to tropical correspondence theorems proved in~\cite{NS06} for toric varieties. If the Bergman fan $\Delta_M$ is a vector space, the result reduces to~\cite{Gro14,Gro15,NS06,R15b}. We note that an independent and interesting approach to constructing polyhedral complexes that resemble virtual fundamental classes is pursued by Gathmann, Markwig, and Ochse in~\cite{GMO17,GO17}. It would be interesting to examine how their approach relates to ours. 

The idea of constructing a Gromov--Witten theory purely in the world of matroids has spiritual parallels. For example, the Chern--Schwarz--Macpherson class of a matroid is a balanced tropical cycle that recovers the CSM classes of complex arrangement complements~\cite{dMRS}. Speyer constructed an invariant of a matroid by associating to it the $K$-theory class of the structure sheaf of an associated torus orbit closure in a Grassmannian~\cite{Spe09}; its properties were studied further by Fink and Speyer~\cite{FS12}.  The Chow and $K$-theory of matroids are connected by a beautiful recent paper of Berget, Eur, Spink, and Tseng~\cite{BEST}, who connect wonderful models of matroids to equivariant vector bundles on toric varieties. In fact, this construction plays a crucial role in our study. In related work, Dastidar and Ross have defined matroid $\psi$--classes, treating the Chow ring of a matroid as analogous to the moduli space of pointed rational curves~\cite{DR21}. 

When $M$ is complex realizable, the points of the tropical cycle $c_\Gamma(M)$ parameterize maps from tropical rational curves in $\Delta_M$. In the special case where the degree of these rational curves is $1$, one obtains spaces of tropical lines in tropical linear spaces, which can be interpreted as a tropical Fano variety, or as a tropical flag manifold~\cite{BEZ,JMRS,Lam18}. Our results may therefore be viewed as a non-linear partial generalization of these. 

Finally, our results fundamentally concern integrals on the space of stable maps to the permutohedral toric variety, which is also the subject of the papers~\cite{BK05,GKP14,KR16}. 

\subsection*{Acknowledgements} We are grateful to Dan Abramovich, Christopher Eur, Dagan Karp, and Navid Nabijou for helpful conversations related to this work and to Hannah Markwig for comments on an earlier draft. D.R. was an academic visitor at the Chennai Mathematical Institute in 2018 when this project began, and is grateful to the institute for tea, biscuits, and other hospitality. The paper benefited from the careful reading and helpful comments of an anonymous referee. 

\noindent
D.R. was supported by EPSRC New Investigator Grant EP/V051830/1.


%
%
%
%
%

\section{Virtual fundamental classes of wonderful models}

We work over an algebraically closed field of characteristic $0$ and construct the virtual classes of matroids. The section involves three ingredients: the tautological vector bundle cutting out the wonderful model of a matroid~\cite{BEST}, the Grothendieck--Riemann--Roch theorem~\cite{CG07}, and the functoriality of the virtual fundamental class~\cite{KKP}. They will be combined to obtain a combinatorial formula for the virtual fundamental class of a wonderful model for a complex arrangement complement.

\subsection{Conventions on logarithms}
We prefer to work here over the category of fine logarithmic schemes rather than the more common convention of fine and saturated logarithmic schemes used in~\cite{AC11,Che10,GS13}. The geometric reason for this is that natural forgetful map
\[
\mathsf K_\Gamma(X)\to \Mbar_{0,r}(X,\beta)
\]
to the moduli space of ordinary stable maps is an embedding in the fine case, but is finite in the fine and saturated case. For statements that require a global embedding into a smooth object, the former is more convenient. The literature does not use the saturation hypothesis in a significant way, and it is easily excised without affecting the Gromov--Witten theory. As this is a technical discussion and likely well-known to experts, we delay this to Section~\ref{sec: fine-v-fine-saturated}. 

\subsection{Conventions on discrete data}\label{sec: discrete-data} Let $X$ be a smooth projective variety and $D$ a simple normal crossings divisor with components $D_1,\ldots, D_s$. Logarithmic Gromov--Witten theory probes the geometry of a simple normal crossings pair $(X,D)$ by the intersection theory on moduli spaces $\mathsf K(X,D)$ of logarithmic stable maps
\[
F: (C,p_1,\ldots, p_r)\to (X,D).
\]
The domain is allowed to be any nodal pointed curve, and the morphism satisfies a number of properties in addition to stability, which the reader may find in~\cite{GS13}. We provide a working definition for the reader. The interior of the moduli space parameterizes maps such that $F^{-1}(D)$ is a collection of marked points. On this interior, the curve $C$ is smooth and the scheme theoretic order of tangency of $F$ at each marked point $p_i$ with each boundary component $D_j$ is a fixed nonnegative integer $c_{ij}$. This contact order is locally constant in flat families of logarithmic stable maps. The space $\cK(X,D)$ is a compact moduli space that contains this locus as an open substack. The new objects in the moduli problem are logarithmic maps from nodal curves subject to a stability condition. The fundamental property of the compact moduli space is that the contact orders $c_{ij}$ are well-defined at \textit{every point} in the moduli space $\mathsf K(X,D)$, and moreover, the numbers $c_{ij}$ are still locally constant in flat families. The genus and number of marked points $r$ are also locally constant. 

It is standard to fix the following discrete data when studying logarithmic Gromov--Witten invariants: (i) the genus of the domain curve $C$, (ii) the number $r$ of marked points on the curve, (iii) the orders $c_{ij}$ of contact of each marked point $p_i$ with the boundary divisor $D_j$. 

We \underline{depart} from standard conventions and let $\Gamma$ package the data of the choice of boundary divisor $D$, the genus $g$, which will always be $0$ for us, the number $r$ of marked points, and their contact orders. Let $\mathsf K_\Gamma(X)$ be the associated moduli space of logarithmic stable maps. It is proved in~\cite{AC11,Che10,GS13} that this is a proper Deligne--Mumford stack of expected dimension
\[
\mathsf{vdim} = (\dim X-3)(1-g)+\int_\beta c(T_X^{\mathsf{log}})+r
\]
The moduli space carries a canonical Chow homology class in this expected dimension called the \textit{virtual fundamental class}. Gromov--Witten invariants are defined as integrals of tautologically defined cohomology classes on the moduli space against the virtual fundamental class. In addition to the original papers, a gentle introduction to the subject may be found in the introductory sections of~\cite{R15b}.

\subsection{The moduli space} Suppose $X$ is a smooth projective variety with a fixed simple normal crossings boundary divisor $D_X$. We consider a globally generated vector bundle $\mathbb E$ with a section $s$ such that the vanishing locus $W = \mathbb V(s)$ has the following properties. 
\begin{enumerate}[(i)]
\item The subscheme $W\subset X$ is smooth of codimension equal to the rank of of $\mathbb E$.
\item The intersection $W\cap D_X$ is a simple normal crossings divisor.
\item For each irreducible component $D_i$ of $D_X$, if the intersection $W\cap D_i$ is nonempty then it is smooth.
\end{enumerate}

\begin{remark}
In our context, $X$ will be the toric variety associated to the fan $\Delta_n$, which is the normal fan of the $n$-dimensional permutohedron. The divisor $D_X\subset X$ will be a subset of the toric boundary, which will later be fed into the discrete data $\Gamma$. The set $W$ will be the wonderful compactification of a hyperplane arrangement complement, as constructed previously. The construction of the vector bundle will be explained shortly. 
\end{remark}

In this situation the divisor $W\cap D_X$ is a simple normal crossings divisor, and the natural map $W\hookrightarrow X$ is strict in the sense of logarithmic geometry. There is an induced an inclusion of moduli spaces:
\[
\mathsf K_\Gamma(W)\hookrightarrow \mathsf K_\Gamma(X).
\]
The vector bundle $\mathbb E$ induces a sheaf on the moduli space by a universal push/pull construction. Consider the universal diagram
\[
\begin{tikzcd}
\mathscr C\arrow{d}{\pi}\arrow{r}{F} & X \\
\mathsf K_\Gamma(X). 
\end{tikzcd}
\]
Define $\mathbb F$ to be the pushforward sheaf $\pi_\star F^\star \mathbb E$. In this context, the moduli space of maps to $X$ can be described using the sheaf $\mathbb F$. 

\begin{theorem}
\label{theoremBundleOnModuliSpace}
The sheaf $\mathbb F$ is a vector bundle and possesses a section $s_{\mathbb F}$ whose stack theoretic vanishing locus is equal to the substack
\[
\mathsf K_\Gamma(W)\hookrightarrow \mathsf K_\Gamma(X).
\]
The Chern character of $\mathbb F$ is given by
\[
	\ch(\mathbb F) = \pi_\star(\Td(\pi) \cdot F^\star \ch(\mathbb E)).
\]
\end{theorem}

\begin{proof}
Let $C$ be a rational curve, and consider a morphism
\[
F: C\to X.
\]
We claim that the cohomology group $H^1(C,f^\star \mathbb E)$ vanishes. Indeed, by construction the vector bundle $\mathbb E$ is a quotient of the trivial bundle; since the curve $C$ has arithmetic genus $0$, the claim is immediate. We have the universal diagram
\[
\begin{tikzcd}
\mathscr C\arrow{d}{\pi}\arrow{r}{F} & X \\
\mathsf K_\Gamma(X). 
\end{tikzcd}
\]
Consider the pullback $F^\star\mathbb E$ of the vector bundle $\mathbb E$ to the universal curve. The higher cohomology groups of this vector bundle on the fibers of $\pi$ have been shown to vanish. By the theorem on cohomology and base change, the pushforward sheaf $R^0\pi_\star F^\star \mathbb E$ is a vector bundle $\mathbb F$.  The section $s_{\mathbb E}$ of $\mathbb E$ therefore gives rise to a section $s_{\mathbb F}$ of the vector bundle $\mathbb F$. The scheme theoretic vanishing locus of $s_{\mathbb F}$ tautologically describes the subfunctor of $\mathsf K_\Gamma(X)$ where the universal curve is scheme theoretically contained in the vanishing locus of the section $s_{\mathbb E}$, namely in $W$. The first result follows. 

The second result follows from a standard application of Grothendieck--Riemann--Roch, which will relate the Chern characters of the sheaves $F^\star \mathbb E$ on the universal curve, and of $\mathbb F$. We briefly explain how to make the application. Since we work in the category of fine logarithmic schemes, by a result of Wise, there is an embedding
\[
\mathsf K_\Gamma(X)\hookrightarrow \overline{\mathcal M}_{0,r}(X,\beta),
\]
into the space of ordinary stable maps, obtained by forgetting the logarithmic data~\cite[Corollary~1.2]{Wis16b}. Moreover, the universal curve of the source is pulled back from the target of this map. The Grothendieck--Riemann--Roch theorem may be applied to the universal curve, exactly as explained in~\cite[Appendix~1]{CG07}, and leads immediately to the claimed result. 
\end{proof}

\subsection{Logarithmic quantum Lefschetz}\label{sectionLogQuantumLefschetz} The moduli space of logarithmic stable maps $\mathsf K_\Gamma(W)$ carries a natural virtual fundamental class~\cite{GS13}. We examine this class after pushforward to the ambient space $\mathsf K_\Gamma(X)$. The description of the moduli space above as the zero locus of a bundle section also produces a class, namely the virtual Poincar\'e dual to the Euler class of the bundle $\mathbb F$. 

\begin{theorem}\label{thm: virtual-compatibility}
The following equality holds in the Chow homology of $\mathsf K_\Gamma(X)$:
\[
[\mathsf K_\Gamma(W)]^{\mathsf{vir}} = c_{\mathsf{top}}(\mathbb F)\cap [\mathsf K_\Gamma(X)]^{\mathsf{vir}}.
\]
\end{theorem}

We recall how the virtual fundamental class on $\mathsf K_\Gamma(W)$ is constructed, following the point of view given by Abramovich and Wise~\cite{AW}. Let $\mathscr A_X$ denote the Artin fan of $X$ with the logarithmic structure dictated by $\Gamma$. Explicitly, this is an open substack of the quotient $[X/T]$ of $X$ by its dense torus, consisting of torus orbits that are contained in the logarithmic boundary of $X$. The stack $\mathscr A_X$ is a simple normal crossings pair in the smooth topology.   

\begin{lemma}
The stack $\mathfrak M(\mathscr A_X)$ of logarithmic maps from curves to $\mathscr A_X$ is logarithmically smooth. Fixing the genus to be $g$ and the contact orders at all marked points by the datum $\Gamma$, the stack $\mathfrak M_{g,\Gamma}(\mathscr A_X)$ of logarithmic maps with these discrete data is irreducible of dimension $3g-3+n$, where $n$ is the number of marked points.
\end{lemma}

We work in the genus $0$ case and drop the genus from the notation. The morphism $W\hookrightarrow X$ is strict, and it follows that the Artin fan $\mathscr A_W$ is an open substack of $\mathscr A_X$. Concretely, it is the complement of those points of $\mathscr A_X$ corresponding to torus orbits that are disjoint from $W$. In particular
\[
\mathscr A_W\hookrightarrow \mathscr A_X
\]
is a strict open immersion. The corresponding inclusion $\fM_\Gamma(\mathscr A_W)\hookrightarrow \fM_\Gamma(\mathscr A_X)$ is also an open immersion. For the purposes of defining the virtual class below, we may therefore ignore $\mathscr A_W$ altogether and replace it with $\mathscr A_X$. 

The composite morphism $W\to X\to\mathscr A_X$ has relative tangent bundle equal to the logarithmic tangent bundle of $W$. The composition also gives rise to a map of moduli spaces
\[
\mu: \mathsf K_\Gamma(W)\to \mathfrak M_\Gamma(\mathscr A_X). 
\]
The vertical tangent space whose fiber at a moduli point $[C,f]$ is given by $H^0(C,f^\star T_W^{\mathsf{log}})$. The vertical obstruction space is given by $H^1(C,f^\star T_W^{\mathsf{log}})$. By the Riemann--Roch theorem, the difference between these dimensions is everywhere constant. A globalized version of this fact is that the forgetful morphism above is equipped with a relative perfect obstruction theory, see~\cite[Section~6]{AW}, noting as we have above that $\fM_\Gamma(\mathscr A_W)\hookrightarrow \fM_\Gamma(\mathscr A_X)$ is an open immersion. In particular, there is a virtual pullback morphism on the Chow group:
\[
\mu^!: A_\star(\mathfrak M_\Gamma(\mathscr A_X);\QQ)\to A_\star(\mathsf K_\Gamma(W);\QQ). 
\]
The virtual pullback of the fundamental class gives rise to the virtual fundamental class. If $W$ is replaced with $X$ above, the map $\mu$ is smooth; the pullback map coincides with smooth pullback on Chow groups. 

\subsubsection{Proof of Theorem~\ref{thm: virtual-compatibility}} 

The argument is well-known in the non-logarithmic context, and we need only manoeuvre ourselves into a situation where known results on virtual structures imply the result~\cite{KKP}. By hypothesis the inlcusion $W\hookrightarrow X$ is a strict morphism of pairs: divisorial components of $\partial W$ are precisely the nonempty intersections of divisors of $X$ with $W$. It therefore follows that the relative cotangent complex of this morphism coincides with the relative logarithmic cotangent complex. In particular, the kernel of the morphism of logarithmic cotangent bundles is the usual conormal bundle $W$ in $X$:
\[
0\to N_{W/X}^\vee\to \Omega_X^{\mathsf{log}}|_W\to \Omega_W^{\mathsf{log}}\to 0.
\]
Since $W$ is cut out of $X$ by a section of $\mathbb E$, we have an identification of $N_{W/X}$ with the vector bundle $\mathbb E|_W$.  By pulling back this sequence to the universal curve over $\mathsf K_\Gamma(W)$, taking its derived pushforward, and rotating, we obtain a distinguished triangle in the derived category of coherent sheaves on $\mathsf K_\Gamma(W)$. By unraveling the definition of the obstruction theory given above, we obtain a sequence of morphisms to the relative cotangent complexes of our moduli spaces as described below. We let $j:\mathsf K_\Gamma(W)\hookrightarrow \mathsf K_\Gamma(X)$ denote the inclusion.
\[
\begin{tikzcd}
j^\star (R\pi_\star F^\star T^{\mathsf{log}}_X)^\vee\arrow{r}\arrow{d}& (R\pi_\star F^\star T_W^{\mathsf{log}})^\vee\arrow{r}\arrow{d}&j^\star \mathbb F[1]\arrow{r}\arrow{d}& j^\star (R\pi_\star F^\star T^{\mathsf{log}}_X)^\vee[1]\arrow{d}\\
j^\star L_{\mathsf K_\Gamma(X)/\mathfrak M_\Gamma(\mathscr A_X)} \arrow{r} & L_{\mathsf K_\Gamma(W)/\mathfrak M_\Gamma(\mathscr A_X)} \arrow{r} &j^\star L_{\mathsf K_\Gamma(W)/\mathsf K_\Gamma(X))} \arrow{r}&j^\star L_{\mathsf K_\Gamma(X)/\mathfrak M_\Gamma(\mathscr A_X)}[1].
\end{tikzcd}
\]
It follows that the two different obstruction theories on $\cK_\Gamma(W)$ -- one coming from the standard theory and the other from the vector bundle $\mathbb F$, coincide; this statement now follows from the result of Kim--Kresch--Pantev~\cite{KKP}; see also~\cite[Remark~5.17]{Mano12}. 
\qed

\begin{remark}\label{remarkVirtualCompatibilityAndModifications}
The following variation on the above argument will be useful. We have a sequence of morphisms
\[
\cK_\Gamma(W)\hookrightarrow \cK_\Gamma(X)\to \mathfrak M_\Gamma(\mathscr A_X). 
\]
The logarithmic algebraic stack $\mathfrak M_\Gamma(\mathscr A_X)$ is equipped with a tropicalization~\cite[Section~2]{R19}. Any subdivision of this tropicalization determines logarithmic modifications of all three of these spaces. The logarithmic modifications of $\cK_\Gamma(W)$ and $\cK_\Gamma(X)$ are both equipped with virtual fundamental classes by~\cite[Section~3]{R19}. The argument above ensures that the virtual fundamental classes of the modifications are still related by the top Chern class of the vector bundle $\mathbb F$ above. 
\end{remark}

\subsection{Specializing to the wonderful geometry} We now specialize to let $X$ be the toric variety associated to the permutohedral fan $\Delta_n$ and let $W$ be the wonderful model of a hyperplane arrangement complement. We may exhibit $W$ as a zero of a section of a vector bundle on $X$. The construction was discovered by Berget--Eur--Spink--Tseng~\cite{BEST} and is closely related to~\cite{Kap93,Tev07}. The hyperplane arrangement underlying $W$ canonically determines a point in a Grassmannian $\mathbb G(d,n)$. The $\mathbb G_m^n$-orbit closure of this point determines a toric variety $Y$ which inherits the tautological quotient bundle from $\mathbb G(d,n)$. There is a canonical resolution $X\to Y$, and the pullback of the tautological quotient bundle to $X$ will be denoted $\mathbb E$. 

Let $M$ be the matroid associated to the hyperplane arrangement above. Note that it is a matroid on a set of $n+1$ elements. The rank of the matroid is $d+1$. We will assume for convenience throughout that $M$ is loop free. The divisor $D_X$ induces a divisor $D_W$ on $W$ by intersection. The divisor $D_X$ and the compatible divisor $D_W$ will be implicit in $\Gamma$ below. 

\begin{theorem}
\label{theoremBESTbundleSectionChern}
Let $W$ and $X$ be as above. The vector bundle $\mathbb E$ on $X$ is globally generated and possesses a transverse section $s_{\mathbb E}$ whose scheme theoretic vanishing locus is
\[
W\hookrightarrow X.
\]
The total Chern class $c(\mathbb E)$ is equal to the tautological quotient total Chern class $c(M)$ of the matroid $M$. 
\end{theorem}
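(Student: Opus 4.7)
The plan is to extract all three assertions from the tautological vector bundle construction of Berget--Eur--Spink--Tseng~\cite{BEST}, repackaging their output in the conventions of this paper. I first set the stage: the realization $\mathcal{A}$ of $M$ determines a $(d+1)$-dimensional subspace $L \subset k^{n+1}$ with $\mathbb{P}(L) = \mathbb{P}^d$, and hence a point $[L] \in \mathbb{G}(d+1, n+1)$. The variety $Y$ is the closure of the $(\mathbb{G}_m)^{n+1}/\mathbb{G}_m$-orbit of $[L]$, its fan is supported on the matroid polytope of $M$, and $X \to Y$ is the canonical toric resolution induced by the refinement $\Delta_n$. By definition, $\mathbb{E}$ is the pullback to $X$ of the tautological quotient bundle $Q$ on the Grassmannian.

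Global generation is then immediate from the tautological exact sequence $0 \to S \to \mathcal{O}_{\mathbb{G}}^{n+1} \to Q \to 0$: pulling back through $Y$ and $X$ preserves surjectivity, so the $n+1$ coordinate sections of the trivial bundle descend to generators of $\mathbb{E}$. Likewise, the equality $c(\mathbb{E}) = c(M)$ is essentially by definition in the formalism of~\cite{BEST}, since the tautological quotient total Chern class $c(M)$ of a matroid is constructed precisely so as to coincide with $c(\mathbb{E})$ in the realizable case; concretely, $c(\mathbb{E})$ can be computed by equivariant localization on the torus-invariant resolution $X \to Y$, producing a polynomial in the boundary divisor classes whose coefficients depend only on the matroid polytope of $M$.

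The substantive claim is the existence of a transverse section $s_\mathbb{E}$ with scheme-theoretic zero locus $W$. I would construct $s_\mathbb{E}$ from the realization data by combining the tautological exact sequence on $X$ with the inclusion $L \hookrightarrow k^{n+1}$: this yields a distinguished global section of $\mathbb{E}$ whose zero locus, restricted to the dense torus of $X$, coincides with the arrangement complement $U_\mathcal{A} = \mathbb{P}(L) \cap T_{\mathbb{P}^n} \subset \mathbb{P}^n$. Taking closures in $X$ then identifies $V(s_\mathbb{E})$ with the strict transform $W$ of $\mathbb{P}(L)$ at the level of underlying sets.

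The main obstacle will be upgrading this to a scheme-theoretic equality and verifying transversality. The key step is a local analysis of $s_\mathbb{E}$ along the exceptional divisors of $X \to \mathbb{P}^n$, checking that the order of vanishing along each such divisor matches the multiplicity predicted by the iterated wonderful blowup recipe. Once that is in place, the coincidence $\mathrm{rk}\,\mathbb{E} = n - d = \mathrm{codim}_X W$, combined with the smoothness of $W$ from the original de~Concini--Procesi construction, forces $s_\mathbb{E}$ to vanish transversely along $W$, and the scheme-theoretic zero locus to agree with $W$ on the nose.
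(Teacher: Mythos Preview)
Your approach is essentially the same as the paper's: both extract all three assertions directly from~\cite{BEST}. The paper's proof is purely citational---global generation because $\mathbb E$ is a quotient of a trivial bundle (as you say), the section from \cite[Theorem~7.10]{BEST}, and the Chern class identity from \cite[Proposition~3.7]{BEST}---whereas you unpack what those citations contain and sketch how one would reprove the section statement. The one place to tighten is your description of $s_{\mathbb E}$: the phrase ``combining the tautological exact sequence with the inclusion $L\hookrightarrow k^{n+1}$'' does not by itself specify a single section of the rank-$(n-d)$ bundle $\mathbb E$, and the local analysis you outline is precisely the content of \cite[Theorem~7.10]{BEST}, so it is cleaner to cite it rather than redo it.
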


The tautological quotient Chern classes of a matroid can be found in~\cite[Definition~3.9]{BEST}.

\begin{proof}
The bundle $\mathbb E$ is the tautological quotient bundle as defined in \cite[Definition 1.2]{BEST}, which as a quotient of a trivial bundle, is globally generated. The existence of the section $s_{\mathbb E}$ is \cite[Theorem 7.10]{BEST}, and the statement about $c(\mathbb E)$ is an immediate consequence of \cite[Proposition 3.7]{BEST}.
\end{proof}

\subsection{Virtual fundamental classes of matroids}\label{subsectionVFCOfMatroid}

Theorems \ref{theoremBundleOnModuliSpace}, \ref{thm: virtual-compatibility}, and \ref{theoremBESTbundleSectionChern} together tell us that the image of $[\mathsf K_\Gamma(W)]^{\mathsf{vir}}$ in the Chow homology of $\mathsf K_\Gamma(X)$ can be written in terms of the tautological quotient Chern class $c(M)$. We now make this explicit.

For $i \in \mathbb Z_{>0}$, let $p_i \in \mathbb Q[x_1, x_2, \dots]$ be recursively defined by
\[
	p_i = x_1 p_{i-1} - x_2 p_{i-2} + ... + (-1)^i x_{i-1} p_1 + (-1)^{i+1} i x_i,
\]
and for all $j \in \mathbb Z_{\geq 0}$, let $\kappa_{\Gamma, j}(M)$ be the degree $j$ term of
\[
	\pi_\star\left(\mathsf{Td}(\pi) \cdot \left( n-d +  \sum_{i \geq 1} p_i\Big( (F^\star(c_\ell(M)))_{\ell \in \mathbb Z_{>0}} \Big)  \right) \right) \in A^\star(K_\Gamma(X)).
\]
Set $e_\Gamma(M) \in A^\star(\mathsf K_\Gamma(X))$ to be
\[
	 e_\Gamma(M) = q_{\deg(c_1(M) \cap \beta) + n - d}\Big( (\ell! \kappa_{\Gamma,\ell}(M))_{\ell \in \mathbb Z_{>0}} \Big),
\]
where $\beta$ is the curve class encoded by $\Gamma$, and the $q_i \in \mathbb Q[x_1, x_2, \dots]$ are recursively defined for $i \in \mathbb Z_{>0}$ by
\[
	(-1)^{i+1} i q_i =  x_i - x_{i-1} q_1 + x_{i-2} q_2 - ... + (-1)^{i-1}
x_1 q_{i-1}.
\]

\begin{definition}\label{definitionVFCForMatroid}
The \emph{virtual fundamental class} of $M$ with respect to $\Gamma$ is the class
\[
	[M]_\Gamma^{\mathsf{vir}} = e_\Gamma(M) \cap [\mathsf K_\Gamma(X)]^{\mathsf{vir}} \in A_\star( \mathsf K_\Gamma(X)).
\]
\end{definition}

\begin{remark}
Note that the definition $[M]_\Gamma^{\mathsf{vir}}$ depends only the discrete data $\Gamma$ and the matroid $M$, and in particular, makes sense even when the matroid $M$ is not realizable. 
\end{remark}

We now see that Theorem \ref{VirtualFundamentalClassOfMatroidIsVirtualFundamentalClassOfArrangement} is an immediate consequence of Definition \ref{definitionVFCForMatroid} and Theorems \ref{theoremBundleOnModuliSpace}, \ref{thm: virtual-compatibility}, and \ref{theoremBESTbundleSectionChern}.

\begin{proof}[Proof of Theorem \ref{VirtualFundamentalClassOfMatroidIsVirtualFundamentalClassOfArrangement}]

By the Girard--Newton formula, the $p_i$ and $q_i$ are the universal polynomials describing the Chern character in terms of the Chern classes and vice-versa, see e.g., \cite[Example 3.2.3]{Ful98}. Thus for all $j \in \mathbb Z_{\geq 0}$, Theorems \ref{theoremBESTbundleSectionChern} and \ref{theoremBundleOnModuliSpace} imply that $\kappa_{\Gamma, j}(M)$ is the degree $j$ term of $\ch(\mathbb F)$, so
\[
	c_{\mathsf{top}}(\mathbb F) = q_{\deg(c_1(M) \cap \beta) + n - d}\Big( (\ell! \kappa_{\Gamma,\ell}(M))_{\ell \in \mathbb Z_{>0}} \Big) = e_\Gamma(M),
\]
where we note that $\mathrm{rk}(\mathbb F) = \deg(c_1(M) \cap \beta) + n - d$ by Riemann--Roch for vector bundles on curves. Thus Theorem \ref{thm: virtual-compatibility} gives the desired result.

\end{proof}

\subsection{Fine moduli spaces of maps}\label{sec: fine-v-fine-saturated} We record here how to adapt the results in the literature to work in the fine logarithmic setting, rather than the fine and saturated one. In this section, we set $X$ be a simple normal crossings pair with divisor $D$. We require four statements: (i) the moduli space of fine logarithmic maps to $X$ is a proper Deligne--Mumford stack, (ii) the moduli space is equipped with a virtual fundamental class, (iii) this virtual fundamental class is equal to the pushforward of the virtual fundamental class on the space of fine and saturated maps, and (iv) the equality of virtual classes is compatible with evaluation morphisms. 

The moduli space of fine logarithmic stable maps to $X$ is certainly a fine logarithmic algebraic stack, by the main results of~\cite{Wis16a,Wis16b}. There is a diagram comparing the fine moduli problem with the fine and saturated one:
\[
\begin{tikzcd}
\mathsf K_\Gamma(X)^{\mathsf{fs}}\arrow{r}\arrow{d} & \mathsf K_\Gamma(X)^{\mathsf{fine}}\arrow{d} \\
\fM_\Gamma(\mathscr A_X)^{\mathsf{fs}}\arrow{r}&\fM_\Gamma(\mathscr A_X)^{\mathsf{fine}}
\end{tikzcd}
\]
where $\mathsf A_X$ denotes the Artin fan of $X$. Both vertical maps are strict by the argument in~\cite[Lemma~4.1]{AW}, and therefore the square is Cartesian in the category of algebraic stacks. 

The lower horizontal map
\[
\begin{tikzcd}
\fM_\Gamma(\mathscr A_X)^{\mathsf{fs}}\arrow{r}&\fM_\Gamma(\mathscr A_X)^{\mathsf{fine}}
\end{tikzcd}
\]
is obtained from the saturation map, see for instance~\cite{Wis16b}. However, since the source and target are logarithmically smooth in, respectively, the fine and saturated and fine categories, the map is simply the normalization. It is therefore both proper and birational, and identifies fundamental classes under pushforward. Since the vertical maps carry the same perfect obstruction theory, it now follows from the Costello--Herr--Wise comparison theorem that the fine and saturated virtual fundamental class pushes forward to the virtual fundamental class on the space of fine maps~\cite{Cos06,HW21}. 

Finally, the evaluation maps from $\mathsf K_\Gamma(X)^{\mathsf{fs}}$ to the strata of $X$ and the forgetful map to the moduli stack of prestable curves factor through the space $\Mbar_{0,r}(X,\beta)$ of ordinary stable maps, and then by~\cite[Theorem~1.1]{Wis16b}, they necessarily factor through the space $\mathsf K_\Gamma(X)^{\mathsf{fine}}$ as well. By the projection formula, for the purposes of computing logarithmic Gromov--Witten invariants, i.e. integrals against the virtual fundamental class of cohomology classes pulled back from evaluations and from the stack of curves, we may use the theory of fine logarithmic maps. 

\section{Quantum cohomology of wonderful models}

Keeping the notation from the previous section, we will assume that in the choice of discrete data $\Gamma$, the divisor $D_X$ is empty, and we will show how in this special case Theorem \ref{VirtualFundamentalClassOfMatroidIsVirtualFundamentalClassOfArrangement} implies that the quantum cohomology of the wonderful model is a combinatorial invariant.

We first set some notation for the relevant curve classes. Let $U = X(\Delta_M) \subset X$, and let $A_1(M)$ denote the subgroup of $A_1(X)$ consisting of all $\beta$ that satisfy $\gamma \cap \beta = 0$ for all $\gamma \in \ker( A^1(X) \to A^1(U))$. Also let $\iota: W \hookrightarrow X$ denote the inclusion.

\begin{proposition}\label{PropositionImageOfCurveClassesIsCombinatorial}
The pushfoward map $A_1(W) \to A_1(X)$ is injective and has image $A_1(M)$.
\end{proposition}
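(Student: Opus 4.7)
The statement breaks into three assertions: (i) $\iota_*(A_1(W)) \subseteq A_1(M)$, (ii) injectivity of $\iota_*$, and (iii) $\iota_*(A_1(W)) \supseteq A_1(M)$. I work rationally; the integral statement follows since $A^*(X)$ and $A^*(W)$ are torsion-free (the former by standard toric arguments, the latter from the Feichtner--Yuzvinsky presentation of the matroid Chow ring).

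For (i), I apply the projection formula. For $\beta \in A_1(W)$ and $\gamma \in A^1(X)$, one has $\gamma \cap \iota_*\beta = \iota_*(\iota^*\gamma \cap \beta)$, and any $\gamma$ in $K := \ker(A^1(X) \to A^1(U))$ pulls back to zero on $W$ because $\iota$ factors through $U$. Hence $\iota_*\beta \in A_1(M)$.

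For (ii) and (iii) simultaneously, my main input will be the identification $\iota^*: A^1(X) \to A^1(W)$ is surjective with kernel exactly $K$. Surjectivity is immediate from the Feichtner--Yuzvinsky/BEST description: the classes $[W \cap D_\rho] = \iota^*[D_\rho]$ for $\rho$ a ray of $\Delta_M$ generate $A^1(W)$. The equality of kernels holds because $A^1(W)$ and $A^1(U)$ both admit the same presentation as a quotient of the free abelian group on rays of $\Delta_M$ modulo the character relations pulled back from the ambient torus. Given this, I dualize via Poincar\'e duality: on each of $W$ and $X$ the pairing $A_1 \otimes A^1 \to \mathbb{Q}$ is perfect (classical for the smooth projective toric variety $X$; for the matroid Chow ring of $W$ this is part of Adiprasito--Huh--Katz), and the projection formula $\langle \iota_*\beta, \gamma \rangle_X = \langle \beta, \iota^*\gamma \rangle_W$ shows $\iota_*$ is the $\mathbb{Q}$-linear dual of $\iota^*$. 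The dual of a surjection with kernel $K$ is an injection whose image is the annihilator of $K$ in $A_1(X)$, which is precisely $A_1(M)$ by definition.

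The single nonformal point is checking that $\ker(\iota^*) = K$ on $A^1$; the containment $K \subseteq \ker(\iota^*)$ is immediate from factorization through $U$, but the reverse containment genuinely requires the combinatorial presentation of $A^1(W)$, which is where the Feichtner--Yuzvinsky description (incorporated into BEST) does the real work. Alternative routes for (ii) alone include extracting injectivity directly from surjectivity of $\iota^*$ in all degrees plus the perfect degree pairing on $W$, but the dualization argument above treats (ii) and (iii) in one stroke and is the natural formulation.
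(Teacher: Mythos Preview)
Your proof is correct and is essentially the same as the paper's: both rest on the projection formula, the identification of $\ker\iota^*$ with $\ker(A^1(X)\to A^1(U))$ via the Feichtner--Yuzvinsky isomorphism $A^*(U)\cong A^*(W)$, and the perfect pairings $A^1\times A_1\to\ZZ$ on $W$ and $X$. The only difference is packaging: you invoke the linear-algebra fact that the dual of a surjection with kernel $K$ is an injection onto the annihilator of $K$, whereas the paper unwinds this by hand (constructing, for $\beta\in A_1(M)$, the functional $\gamma\mapsto\deg(\gamma\cap\beta)$, factoring it through $A^1(W)$, and representing it by some $\beta'\in A_1(W)$). Your remark about working rationally and deducing the integral statement from torsion-freeness is a reasonable extra bit of care; the paper simply asserts the integral perfect pairings directly.
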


\begin{proof}
The pullback map $\iota^\star: A^1(X) \to A^1(U) \to A^1(W)$ is surjective and has kernel
\begin{equation}\label{kernelOfPullbackToWonderfulModelIsKernelOfPulllbackToBergmanFan}
	\ker\iota^\star = \ker( A^1(X) \to A^1(U))
\end{equation}
because $U \hookrightarrow X$ is an open immersion and because $A^\star(U) \to A^\star(W)$ is an isomorphism of graded rings by \cite[Theorem 3]{FY04}. Because $A^1(U)$ is generated by Chern classes of line bundles and $W \hookrightarrow X$ is a closed immersion of smooth varieties, we also have that $\iota^\star$ and $\iota_\star$ satisfy the projection formula
\[
	\iota_\star( \iota^\star\gamma \cap \beta) = \gamma \cap \iota_\star\beta
\]
for any $\gamma \in A^1(X)$ and $\beta \in A_1(W)$. Thus for any $\beta \in \ker\iota_\star$ and $\gamma \in A^1(X)$,
\[
	\deg( \iota^\star\gamma \cap \beta) = \deg( \gamma \cap \iota_\star\beta ) = 0,
\]
so $\beta = 0$ by the surjectivity of $\iota^\star$ and the fact that $\deg( \_ \,\cap\, \_ ): A^1(W) \times A_1(W) \to \ZZ$ is a perfect pairing. Thus $\iota_\star$ is injective. For any $\beta \in A_1(W)$ and $\gamma \in \ker( A^1(X) \to A^1(U))$,
\[
	\gamma \cap \iota_\star\beta = \iota_\star( \iota^\star\gamma \cap \beta) = 0,
\]
so $\iota_\star(A_1(W)) \subset A_1(M)$.

Now suppose $\beta \in A_1(M)$ and consider the map $\varphi: A^1(X) \to \ZZ: \gamma \mapsto \deg( \gamma \cap \beta)$. By the surjectivity of $\iota^\star$, (\ref{kernelOfPullbackToWonderfulModelIsKernelOfPulllbackToBergmanFan}), and the definition of $A_1(M)$, the map $\varphi$ factors as $A^1(X) \xrightarrow{\iota^\star} A^1(W) \xrightarrow{\psi} \ZZ$ for some $\psi$. Let $\beta' \in A_1(W)$ be such that $\psi: A^1(W) \to \ZZ$ is given by $\gamma' \mapsto \deg( \gamma' \cap \beta')$. Then for any $\gamma \in A^1(X)$,
\[
	\deg(\gamma \cap \iota_\star\beta') = \deg( \iota^\star \gamma \cap \beta') = \psi(\iota^\star(\gamma)) = \varphi(\gamma) = \deg(\gamma \cap \beta),
\]
so $\iota_\star\beta' = \beta$ by the fact that $\deg( \_ \,\cap\, \_ ): A^1(X) \times A_1(X) \to \ZZ$ is a perfect pairing. Thus $\iota_\star(A_1(W)) = A_1(M)$.
\end{proof}

The following corollary of Theorem \ref{VirtualFundamentalClassOfMatroidIsVirtualFundamentalClassOfArrangement} will show that, in a precise sense, the genus 0 Gromov--Witten theory of $W$ depends only on the matroid $M$.

Let $\eta: W \hookrightarrow U$ and $\rho: U \hookrightarrow X$ denote the inclusions, and for each $\gamma \in A^\star(U)$, fix some $\widetilde{\gamma} \in A^\star(X)$ such that $\rho^\star\widetilde{\gamma} = \gamma$. Note that the following determines all genus 0 Gromov--Witten invariants of $W$ because $\eta^\star: A^\star(U) \to A^\star(W)$ is an isomorphism by \cite[Theorem 3]{FY04}.

\begin{corollary}\label{CorollaryGromovWittenInvariantsAreCombinatorial}
Let $\gamma_1, \dots, \gamma_m \in A^\star(U)$. Then
\begin{align*}
	\cK_\Gamma(\rho \circ \eta)_\star&\left((\ev_1^\star(\eta^\star\gamma_1) \cup \dots \cup \ev_m^\star(\eta^\star\gamma_m) ) \cap [\cK_\Gamma(W)]^\mathsf{vir}\right)\\
	&= (\ev_1^\star(\widetilde{\gamma_1}) \cup \dots \cup \ev_m^\star(\widetilde{\gamma_m})) \cap [M]_\Gamma^\mathsf{vir},
\end{align*}
and
\[
	\int_{[\cK_\Gamma(W)]^\mathsf{vir}} \ev_1^\star(\eta^\star\gamma_1) \cup \dots \cup \ev_m^\star(\eta^\star\gamma_m) = \int_{[M]_\Gamma^\mathsf{vir}} \ev_1^\star(\widetilde{\gamma_1}) \cup \dots \cup \ev_m^\star(\widetilde{\gamma_m}).
\]
\end{corollary}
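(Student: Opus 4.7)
The plan is to deduce this directly from Theorem~\ref{VirtualFundamentalClassOfMatroidIsVirtualFundamentalClassOfArrangement} by applying the projection formula to the induced map on moduli spaces, using that the evaluation maps commute with the inclusions of targets. Let $J = \cK_\Gamma(\rho \circ \eta): \cK_\Gamma(W) \to \cK_\Gamma(X)$, and write $\ev_i^W$ and $\ev_i^X$ for the $i$-th evaluation maps on the respective moduli spaces.

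First I would record the elementary fact that for each $i$, the square
\[
\begin{tikzcd}
\cK_\Gamma(W) \arrow[r, "J"] \arrow[d, "\ev_i^W"'] & \cK_\Gamma(X) \arrow[d, "\ev_i^X"] \\
W \arrow[r, "\rho \circ \eta"'] & X
\end{tikzcd}
\]
commutes, since evaluation is functorial in the target. Therefore
\[
J^\star (\ev_i^X)^\star \widetilde{\gamma_i} \;=\; (\ev_i^W)^\star (\rho \circ \eta)^\star \widetilde{\gamma_i} \;=\; (\ev_i^W)^\star \eta^\star \gamma_i,
\]
using $\rho^\star \widetilde{\gamma_i} = \gamma_i$ by construction. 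Taking cup products, the pullback along $J$ of $\alpha := \ev_1^\star(\widetilde{\gamma_1}) \cup \cdots \cup \ev_m^\star(\widetilde{\gamma_m})$ equals $\beta := \ev_1^\star(\eta^\star \gamma_1) \cup \cdots \cup \ev_m^\star(\eta^\star \gamma_m)$.

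Next I would invoke the projection formula for the proper morphism $J$ of Deligne--Mumford stacks, which gives
\[
J_\star\big( J^\star \alpha \cap [\cK_\Gamma(W)]^{\mathsf{vir}} \big) \;=\; \alpha \cap J_\star [\cK_\Gamma(W)]^{\mathsf{vir}}.
\]
By Theorem~\ref{VirtualFundamentalClassOfMatroidIsVirtualFundamentalClassOfArrangement}, the right-hand side equals $\alpha \cap [M]_\Gamma^{\mathsf{vir}}$, and the left-hand side equals $J_\star(\beta \cap [\cK_\Gamma(W)]^{\mathsf{vir}})$ by the identification $J^\star \alpha = \beta$. This proves the first displayed identity.

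For the integral identity, I would then push forward both sides of the first identity along the (proper) structure morphism to $\operatorname{Spec} k$; since this factors as $\cK_\Gamma(W) \to \cK_\Gamma(X) \to \operatorname{Spec} k$, the degree of $\beta \cap [\cK_\Gamma(W)]^{\mathsf{vir}}$ coincides with the degree of its pushforward along $J$, which by the first identity is the degree of $\alpha \cap [M]_\Gamma^{\mathsf{vir}}$. No step here is substantive: the only thing to verify carefully is the commutativity of the evaluation square and that $J$ is representable and proper so that the projection formula applies, both of which are standard. The essential content, namely the comparison of virtual classes, has already been carried out in Theorem~\ref{VirtualFundamentalClassOfMatroidIsVirtualFundamentalClassOfArrangement}.
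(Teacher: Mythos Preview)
Your proof is correct and follows essentially the same route as the paper: both arguments combine the commutativity of evaluation with the inclusion of targets, the projection formula, and Theorem~\ref{VirtualFundamentalClassOfMatroidIsVirtualFundamentalClassOfArrangement}, then take degrees for the second identity. The only cosmetic difference is that the paper justifies the projection formula by noting the $\widetilde{\gamma_i}$ are generated by Chern classes of line bundles, whereas you invoke properness and representability of $J$; both are adequate.
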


\begin{proof}
Because the $\widetilde{\gamma_i}$ and thus the $\ev_i^\star(\widetilde{\gamma_i})$ are generated by Chern classes of line bundles, we have a projection formula
\begin{align*}
	\cK_\Gamma(\rho \circ \eta)_\star&\left( \cK_\Gamma(\rho \circ \eta)^\star(\ev_1^\star(\widetilde{\gamma_1}) \cup \dots \cup \ev_m^\star(\widetilde{\gamma_m})) \cap [\cK_\Gamma(W)]^\mathsf{vir}\right)\\
	&= (\ev_1^\star(\widetilde{\gamma_1}) \cup \dots \cup \ev_m^\star(\widetilde{\gamma_m})) \cap \cK_\Gamma(\rho \circ \eta)_\star[\cK_\Gamma(W)]^\mathsf{vir}\\
	&= (\ev_1^\star(\widetilde{\gamma_1}) \cup \dots \cup \ev_m^\star(\widetilde{\gamma_m})) \cap [M]^\mathsf{vir}_\Gamma,
\end{align*}
where the second equality is due to Theorem \ref{VirtualFundamentalClassOfMatroidIsVirtualFundamentalClassOfArrangement}. The first equation of the theorem then follows from the fact that
\[
	\ev_i \circ \cK_\Gamma(\rho \circ \eta) = \rho \circ \eta \circ \ev_i
\]
for all $i \in \{1, \dots, m\}$. The second equation of the theorem is obtained by applying $\deg$ to both sides of the first equation.
\end{proof}

We may now prove Theorem \ref{QuantumCohomologyIsCombinatorialInvariant}.

\begin{proof}[Proof of Theorem \ref{QuantumCohomologyIsCombinatorialInvariant}]
Suppose $\cA_1$ and $\cA_2$ are hyperplane arrangements in $\PP^d$ realizing $M$. Then \cite[Theorem 3]{FY04} induces an isomorphism of graded vector spaces
\[
	QH^\star(W_{\cA_1}) \cong QH^\star(W_{\cA_2})
\] 
that is compatible with the quantum product by Proposition \ref{PropositionImageOfCurveClassesIsCombinatorial} and Corollary \ref{CorollaryGromovWittenInvariantsAreCombinatorial} applied to the $3$-pointed invariants giving the structure constants of the ring.
\end{proof}

\section{Toric logarithmic structure}

The main construction of subsection \ref{subsectionVFCOfMatroid} associates to each matroid $M$ virtual fundamental classes $[M]_{\Gamma}^\mathsf{vir}$. We recall that part of the discrete data $\Gamma$ includes a choice of boundary divisor $D_X\subset X(\Delta_n)$. In the present section, we specialize to the situation where $D_X$ is the full toric boundary of $X(\Delta_n)$, where the data of $[M]_{\Gamma}^\mathsf{vir}$ can be transformed into a purely combinatorial data structure, namely that of Minkowski weights~\cite{FS97}. Our approach here is inspired by an influential paper of Katz on toric and tropical intersection theory~\cite{Katz09}. 

Throughout the section, we fix a matroid $M$ of rank $d+1$ on $n+1$ elements. The associated Bergman fan $\Delta_M$ is a subfan $\Delta_n$ and is a union of cones of dimension $d$, see for instance~\cite{Kat16}. 

\subsection{Toric intersection theory} Let $Y$ be a complete toric variety with fan $\Sigma$. Let $c\in A^k(Y)$ be a Chow cohomology class. If $\sigma$ is a cone in $\Sigma$ of codimension $k$ and $V(\sigma)$ is the associated closed stratum of dimension $k$, the homology class $c\cap V(\sigma)$ has a well-defined degree. These degrees, ranging over all codimension $k$ cones, determine the Chow cohomology class~\cite{FS97}. Let $\Sigma^{(k)}$ be the cones in $\Sigma$ of codimension $k$. 

\begin{definition}
A $\QQ$-valued function $c$ on $\Sigma^{(k)}$ is \textit{balanced} if it satisfies the relation
\[
\sum_{\sigma\in \Sigma^{(k)}: \sigma\subset \tau} \langle u,n_{\sigma,\tau} \rangle \cdot c(\sigma) = 0,
\]
where $\tau$ is a cone of codimension $k+1$ and the vector $n_{\sigma,\tau}$ is the generator of the lattice of $\sigma$ relative to that of $\tau$. A balanced function of this form is a \textit{Minkowski weight} of codimension $k$. 
\end{definition}

The Minkowski weights of a fixed codimension form a group. The direct sum of the groups admits a ring structure, described in~\cite{FS97}. Fulton and Sturmfels prove the following theorem. 

\begin{theorem}[Fulton--Sturmfels]
The operational Chow cohomology ring of $Y$ is naturally isomorphic to the ring $\mathsf{MW}(\Sigma)$ of Minkowski weights on $\Sigma$.
\end{theorem}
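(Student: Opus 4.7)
The plan is to construct a homomorphism $\phi: A^*(Y) \to \mathsf{MW}(\Sigma)$ in the evident way and then show it is a bijection that respects the ring structures on both sides. Given $c \in A^k(Y)$, I would define
\[
\phi(c): \Sigma^{(k)} \to \QQ, \qquad \phi(c)(\sigma) = \deg\bigl(c \cap [V(\sigma)]\bigr).
\]
Since $V(\sigma)$ has dimension $k$ when $\sigma$ has codimension $k$, capping with $c$ gives a zero-cycle on a complete variety and the degree is well defined. Additivity in $c$ is automatic.

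First I would verify that $\phi(c)$ is balanced. Fix $\tau\in \Sigma^{(k+1)}$. The closed torus-invariant stratum $V(\tau)$ is itself a complete toric variety of dimension $k+1$, with fan given by the star of $\tau$; its torus-invariant prime divisors are precisely the $V(\sigma)$ for $\sigma\supset\tau$ of codimension $k$, and the lattice vector $n_{\sigma,\tau}$ is the primitive ray generator of the corresponding ray in the star fan. For any character $u$ of the torus acting on $V(\tau)$, the principal divisor of $\chi^u$ equals $\sum_{\sigma\supset\tau}\langle u, n_{\sigma,\tau}\rangle V(\sigma)$. Applying the operational restriction of $c$ on $V(\tau)$ and using that principal divisors vanish in $A_k(V(\tau))$ gives the required relation after taking degrees.

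Next I would show that $\phi$ intertwines the cup product on $A^*(Y)$ with the Minkowski weight product of Fulton and Sturmfels. The Minkowski weight product is defined via a generic displacement of one fan so that cones meet transversally, with structure constants controlled by lattice indices; this mirrors the description of the intersection product for torus-invariant cycles in terms of fan-theoretic transversality. Hence the multiplicativity of $\phi$ reduces to comparing the two transverse intersection recipes on torus-invariant cycles. Injectivity of $\phi$ follows from the fact that operational classes on a complete toric variety are detected by their action on the classes $[V(\sigma)]$: these span $A_*(Y)$, and on the toric variety $Y$ an operational class that caps to zero on every torus-invariant cycle vanishes, which one reduces to the smooth/simplicial case (where $A^* \cong A_*$ after a dimension flip) via a toric resolution together with the projection formula.

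The main obstacle, as always in this theorem, is surjectivity: given a Minkowski weight $w$, one must produce a functorial rule that to every morphism $f: Z \to Y$ and every class $\alpha \in A_p(Z)$ assigns a class $w \cap \alpha \in A_{p-k}(Z)$ satisfying all the bivariant axioms. The construction I would use is the one of Fulton and Sturmfels: represent $\alpha$ after suitable equivariant resolution and displacement so that the cycle meets the torus-invariant strata transversally, and then define $w\cap \alpha$ by summing over intersections weighted by $w(\sigma)$ times the appropriate lattice indices; the delicate point is to prove that this is independent of the chosen displacement, for which I would invoke the rational equivalences induced by one-parameter subgroups of the torus sweeping out a family of displacements. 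Once surjectivity is in place, and after checking $\phi$ sends the class built from $w$ back to $w$, the isomorphism of rings follows.
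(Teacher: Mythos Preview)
The paper does not prove this theorem: it is stated as a result of Fulton and Sturmfels and cited to~\cite{FS97} without argument. So there is no proof in the paper to compare against; any assessment must be against the original.

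Your sketch tracks the original argument well through the definition of $\phi$, the verification of balancing via the divisor of a character on $V(\tau)$, and the injectivity step (which does ultimately rest on the fact that the $[V(\sigma)]$ generate $A_\star(Y)$). The ring-structure compatibility is also handled in~\cite{FS97} essentially as you indicate, by matching the fan-displacement product with intersection of cycles.

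The genuine gap is your surjectivity argument. You propose to build an operational class from a Minkowski weight $w$ by a displacement/transversality recipe on cycles in an arbitrary $Z\to Y$. This is not how Fulton and Sturmfels proceed, and it is not clear your recipe can be made to satisfy the bivariant axioms (functoriality for \emph{all} proper pushforwards and flat pullbacks, compatibility with refined Gysin maps, etc.): you would need to define $w\cap\alpha$ for every scheme over $Y$, not just those admitting equivariant resolutions and torus actions, and to check independence of all auxiliary choices in that generality. The actual argument in~\cite{FS97} avoids constructing operational classes directly. One first shows that $A_k(Y)$ is presented with generators $[V(\sigma)]$, $\sigma\in\Sigma^{(k)}$, and relations exactly the balancing relations, so that $\Hom(A_k(Y),\ZZ)\cong \mathsf{MW}^k(\Sigma)$. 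One then proves that the Kronecker duality map $A^k(Y)\to \Hom(A_k(Y),\ZZ)$ is an \emph{isomorphism} for complete toric varieties; this is where the reduction to the simplicial case via toric resolution and the projection formula is used, and it delivers both injectivity and surjectivity at once. Your resolution idea belongs here, applied to the Kronecker map rather than to an ad hoc construction of bivariant classes.
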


If $\pi: X'\to X$ is an equivariant modification of proper toric varieties and $\Sigma'\to \Sigma$ is the induced subdivision of fans, there is an evident inclusion of Minkowski weight rings
\[
\mathsf{MW}(\Sigma)\hookrightarrow \mathsf{MW}(\Sigma')
\]
which coincides, under the above identification, with the pullback homomorphism $\pi^\star$. 

It is both traditional and helpful to view a Minkowski weight not as a function but as an object of polyhedral geometry. Specifically, as the union of the cones in $\Sigma$ on which $c$ is nonzero, decorated by the function values. These are precisely the \textit{balanced polyhedral complexes} of tropical geometry. In particular, the pullback homomorphism under a modification as above corresponds to a refinement of balanced fans with the obvious weighing on their cones.

\subsection{Toric embeddings of moduli} The virtual fundamental class $[M]_{\Gamma}^\mathsf{vir}$ is a homology class on the space $K_\Gamma(X)$, but as we have shown in the previous section, the class is equal to the virtual Poincare dual of the Euler class of a vector bundle. The conversion to Minkowski weights is facilitated by the following two results. The first arises from the geometry of Chow quotients, and describes the moduli space of pointed genus $0$ curves as a subvariety of a toric variety, transverse to its boundary~\cite{Kap93,Tev07}. 

\begin{theorem}
The moduli space $\Mbar_{0,n}$ is equal to the Chow quotient of the Grassmannian of linear planes $G(2,n)$ by the $n$-dimensional dilating torus $T$. The Pl\"ucker embedding 
\[
G(2,n)\hookrightarrow \PP^{\binom{n}{2}-1}
\]
is $T$ equivariant and determines a map of Chow quotients
\[
\Mbar_{0,n}\hookrightarrow Y'_{0,n}
\]
where $Y'_{0,n}$ is the toric Chow quotient of the Pl\"ucker projective space by $T$. There is a toric resolution of singularities $Y_{0,n}\to Y'_{0,n}$ which restricts to an isomorphism on $\Mbar_{0,n}$. Moreover, the toric stratification of $Y_{0,n}$ pulls back to the stratification of $\Mbar_{0,n}$ by the topological type of the universal curve. 
\end{theorem}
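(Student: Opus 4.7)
The plan is to establish the four assertions in sequence, following Kapranov's original construction~\cite{Kap93} and reinterpreting the final two claims through Tevelev's theory of tropical compactifications~\cite{Tev07}. The first assertion is Kapranov's identification of $\Mbar_{0,n}$ with the Chow quotient of $G(2,n)$ by the dilating torus $T$. The action of $T = \Gm^n/\Gm$ on $G(2,n)$ arises from scaling coordinates in $\CC^n$, and its locus of free orbits is identified with $M_{0,n}$ by sending a configuration of $n$ ordered points in $\PP^1$ to the two-plane they span inside $\CC^n$. The Chow quotient, defined as the closure of this open locus inside the Chow variety parameterizing $T$-orbit closures of fixed dimension and class, is shown by Kapranov to agree with $\Mbar_{0,n}$ via the universal family of stable pointed curves.

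For the second assertion, the Pl\"ucker embedding is tautologically $T$-equivariant, since each Pl\"ucker coordinate $p_{ij}$ transforms by the character $\chi_i+\chi_j$. Equivariance yields a morphism of Chow quotients $\Mbar_{0,n}\to Y'_{0,n}$; this morphism is a closed embedding, since Pl\"ucker is itself an embedding and a limit $T$-orbit closure in $G(2,n)$ is uniquely recovered from its image in $\PP^{\binom{n}{2}-1}$. That $Y'_{0,n}$ is a toric variety follows from the theorem of Kapranov--Sturmfels--Zelevinsky that the Chow quotient of a projective toric variety by a subtorus is again toric, with fan equal to the corresponding secondary fan.

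For the toric resolution in the third assertion, I would refine the fan of $Y'_{0,n}$ to the \emph{space of phylogenetic trees}, whose cones are indexed by combinatorial types of metric trees with $n$ labelled leaves. This refinement is simplicial and provides a smooth toric variety $Y_{0,n}$ with a toric morphism to $Y'_{0,n}$. The key input is Tevelev's criterion~\cite{Tev07}: the support of the phylogenetic tree fan agrees with the tropicalization of $M_{0,n}$ in the character lattice of the ambient torus, so $\Mbar_{0,n}\subset Y_{0,n}$ is a tropical compactification and meets each torus orbit transversely in the expected dimension. This forces $Y_{0,n}\to Y'_{0,n}$ to restrict to an isomorphism on $\Mbar_{0,n}$, since any orbit of $Y'_{0,n}$ that is further refined lies outside the tropicalization and therefore misses the closure. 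The stratification claim in the fourth assertion is then a direct consequence: Tevelev's transversality identifies each intersection $\Mbar_{0,n}\cap O_\sigma$ with the boundary stratum labelled by the dual tree of $\sigma$, matching the topological type stratification of the universal curve over $\Mbar_{0,n}$.

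The main obstacle is the third step: identifying the correct fan refinement, the space of phylogenetic trees, and verifying that $\Mbar_{0,n}$ embeds in the corresponding toric variety as a tropical compactification. Both ingredients can be extracted from Kapranov's original analysis of the Chow quotient together with the Speyer--Sturmfels identification of the tropicalization of $M_{0,n}$, but the modern formulation in terms of tropical compactifications makes the isomorphism property and the stratification match conceptually transparent and avoids hands-on computation with the Chow variety.
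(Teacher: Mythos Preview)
Your proposal is sound, and aligns with the paper's treatment: the paper does not give its own proof of this theorem but instead cites it as a known result, attributing the content to Kapranov~\cite{Kap93} and Tevelev~\cite{Tev07}. Your sketch correctly assembles these two sources---Kapranov's Chow quotient construction for the first two assertions, and Tevelev's tropical compactification criterion for the resolution and stratification claims---which is precisely the combination the paper invokes. There is nothing to compare against here beyond noting that your account is a faithful expansion of what the paper leaves to the references.
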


We fix a resolution  $Y_{0,n}$ guaranteed by the theorem, and thereby a smooth toric variety affording a strict closed embedding $\Mbar_{0,n}\hookrightarrow Y_{0,n}$. The next result concerns the space of logarithmic stable maps to a toric variety with respect to its full toric boundary, and supplies a simple description of this moduli space~\cite{R15b}\footnote{The methods originally used to prove this result involve non-archimedean geometry; a simpler and more direct logarithmic geometric proof may be found in~\cite{RW19}.}.

\begin{theorem}
The moduli space $\mathsf K_\Gamma(X)$ is realized as a toroidal modification
\[
\mathsf K_\Gamma(X)\to \Mbar_{0,n}\times X.
\]
In particular, it is logarithmically smooth and irreducible of the expected dimension. Its virtual fundamental class coincides with its fundamental class. 
\end{theorem}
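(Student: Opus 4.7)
The plan is to identify $\mathsf K_\Gamma(X)$ combinatorially as a logarithmic modification of $\Mbar_{0,n}\times X$ determined by a tropical subdivision, and then to deduce the remaining assertions from this identification.

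First, I would describe the tropical moduli space of logarithmic stable maps. Fixing $\Gamma$ (the number $n$ of marked points, their contact orders along the toric boundary, and the curve class), the set of tropical rational $n$-pointed curves equipped with balanced tropical maps to $\Delta_n$ having the prescribed asymptotic slopes at the ends forms a polyhedral complex $\mathsf T_\Gamma(\Delta_n)$. Forgetting the map and remembering only the stabilized metric tree, respectively evaluating at the first marked point, gives a piecewise linear morphism
\[
\mathsf T_\Gamma(\Delta_n) \longrightarrow |\Sigma_{0,n}| \times |\Delta_n|,
\]
where $\Sigma_{0,n}$ is the fan of $Y_{0,n}$. Because $\mathsf T_\Gamma(\Delta_n)$ is carved out of a product of the relevant convex cones by finitely many integer-linear balancing and edge-length positivity conditions, it is automatically a union of cones refining $\Sigma_{0,n}\times\Delta_n$.

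Second, I would check that the corresponding toroidal modification of $\Mbar_{0,n}\times X$ agrees with $\mathsf K_\Gamma(X)$. On the algebraic side, the universal curve over $\Mbar_{0,n}$ together with the chosen point of $X$ assembles a family of prestable maps whose indeterminacies along the logarithmic boundary are resolved in exactly the way prescribed by the combinatorics of tropical maps: at a stratum where the tropical map contracts or subdivides an edge of the domain, the universal family must acquire a bubbled component with a prescribed direction. Comparing the universal properties on each side, the tropical subdivision corresponds to the unique toroidal modification that upgrades prestable maps into logarithmic stable maps with discrete data $\Gamma$. This is the content of the main construction of~\cite{R15b}, with the cleaner Artin fan proof appearing in~\cite{RW19}.

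Finally, I would deduce the remaining claims. Logarithmic smoothness and irreducibility follow directly from the identification as a toroidal modification of the logarithmically smooth and irreducible pair $\Mbar_{0,n}\times X$, together with purity and connectedness of $\mathsf T_\Gamma(\Delta_n)$. The dimension count matches, since $\Mbar_{0,n}\times X$ already has dimension $(n-3)+\dim X$, which is the expected dimension because $c_1(T_X^{\mathsf{log}})=0$ for the toric pair $(X,\partial X)$. For the virtual class, the key observation is that the logarithmic tangent bundle $T_X^{\mathsf{log}}$ of any smooth toric variety with its full toric boundary is canonically trivial of rank $\dim X$, so for every genus $0$ prestable curve $C$ and map $f\colon C\to X$ the group $H^1(C,f^\star T_X^{\mathsf{log}})$ vanishes. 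In the construction recalled in Section~\ref{sectionLogQuantumLefschetz}, the relative obstruction sheaf over $\mathfrak M_\Gamma(\mathscr A_X)$ is therefore zero; the virtual pullback $\mu^!$ collapses to ordinary flat pullback, and the fundamental class of $\mathfrak M_\Gamma(\mathscr A_X)$ pulls back to the fundamental class of $\mathsf K_\Gamma(X)$, proving $[\mathsf K_\Gamma(X)]^{\mathsf{vir}}=[\mathsf K_\Gamma(X)]$.

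The main obstacle is the second step: pinning down precisely which subdivision of $\Sigma_{0,n}\times\Delta_n$ is needed and proving that the resulting modification satisfies the universal property of logarithmic stable maps. The tropical picture suggests the right answer immediately, but translating this into a rigorous isomorphism of stacks requires the careful analysis of logarithmic structures carried out in the cited references.
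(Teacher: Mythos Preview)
The paper does not prove this theorem; it quotes the result from~\cite{R15b} (with the footnote pointing to the simpler argument in~\cite{RW19}) and uses it as a black box. Your proposal is therefore not a comparison against a proof in the paper but rather a sketch of the argument in those references.

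That said, your outline is faithful to how the proof actually goes in~\cite{R15b,RW19}: one constructs the tropical moduli space $\mathsf T_\Gamma(\Delta_n)$, exhibits it as a subdivision of the cone complex of $\Mbar_{0,n}\times X$, and then verifies that the induced toroidal modification represents the functor of logarithmic stable maps. Your deduction of the remaining claims is correct, and in particular your argument that the virtual class equals the fundamental class---via triviality of $T_X^{\mathsf{log}}$ and hence vanishing of $H^1(C,f^\star T_X^{\mathsf{log}})$ on genus $0$ curves---is exactly the standard one. One small imprecision: the target of your tropical forgetful map should be the cone complex of $\Mbar_{0,n}$ (the moduli space of stable tropical rational curves) rather than the full fan $\Sigma_{0,n}$ of $Y_{0,n}$; the former is a proper subcomplex of the latter. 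This does not affect the argument, since the image lands in the subcomplex anyway, but it is worth stating precisely. You are also right to flag the second step as the substantive one: turning the tropical subdivision into an isomorphism of moduli stacks is where all the work lies, and your proposal correctly defers this to the cited sources rather than claiming it is obvious.
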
 

As a consequence the moduli space of stable maps also embeds in a toric variety.

\begin{corollary}
There is a sequence of morphisms
\[
\mathsf K_\Gamma(X)\hookrightarrow Y_\Gamma\to Y_{0,n}\times X
\]
where the first arrow is a closed embedding and the second arrow is a toric modification. Moreover, the toric stratification of $Y_\Gamma$ pulls back to the stratification of $\mathsf K_\Gamma(X)$ by tropical type. 
\end{corollary}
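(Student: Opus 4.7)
The plan is to bootstrap the corollary from the two theorems just stated, by extending the known strict embedding $\Mbar_{0,n}\hookrightarrow Y_{0,n}$ across the toroidal modification that produces $\mathsf K_\Gamma(X)$. First I would form the product embedding
\[
\Mbar_{0,n}\times X\hookrightarrow Y_{0,n}\times X,
\]
which is a strict closed embedding into a smooth toric variety, in the sense that the toric boundary divisor pulls back to the boundary of $\Mbar_{0,n}\times X$ (the latter given by the topological type stratification on the $\Mbar_{0,n}$ factor together with the toric boundary on $X$). By Theorem~4.3 as stated, $\mathsf K_\Gamma(X)$ is a toroidal modification of $\Mbar_{0,n}\times X$, and is logarithmically smooth; concretely, it is obtained by blowing up a locally toric combinatorial structure indexed by the tropical moduli of maps $\Gamma$.

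Next I would translate this toroidal modification into a subdivision of the cone complex $\Sigma_{0,n}\times \Delta_n$ underlying the ambient toric variety $Y_{0,n}\times X$. Because the embedding of $\Mbar_{0,n}\times X$ into $Y_{0,n}\times X$ is strict, the cone complex parameterising toroidal modifications of $\Mbar_{0,n}\times X$ is identified with a subcomplex of $\Sigma_{0,n}\times\Delta_n$, and any modification corresponds to a subdivision there. The subdivision cut out by the logarithmic moduli problem $\Gamma$ therefore extends (after a choice of ambient fan refinement) to a subdivision of $\Sigma_{0,n}\times\Delta_n$; this defines a toric modification
\[
Y_\Gamma\to Y_{0,n}\times X.
\]
By construction, the strict transform of $\Mbar_{0,n}\times X$ in $Y_\Gamma$ is precisely the toroidal modification $\mathsf K_\Gamma(X)$, yielding the desired closed embedding $\mathsf K_\Gamma(X)\hookrightarrow Y_\Gamma$.

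The stratification statement then follows from strictness. A stratum of $Y_\Gamma$ corresponds to a cone of the subdivided fan; its intersection with $\mathsf K_\Gamma(X)$ is the locus where the corresponding combinatorial structure (the topological type of the source curve together with the tropical type of the map) is realised, which is exactly a tropical-type stratum of $\mathsf K_\Gamma(X)$. Put differently, the closed strata of $Y_\Gamma$ pull back to the closed strata of the logarithmic stratification of $\mathsf K_\Gamma(X)$, since strict embeddings transport toric stratifications to their induced toroidal stratifications.

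The step I expect to require the most care is the extension of the subdivision cutting out $\mathsf K_\Gamma(X)$ from a subdivision of the (sub)cone complex of $\Mbar_{0,n}\times X$ to a global subdivision of the fan of $Y_{0,n}\times X$. The combinatorial pattern is only naturally defined on cones meeting the subcomplex, and one must check that it admits a completion to a fan on the ambient lattice without altering the strict transform; this is possible for any strict embedding into a smooth toric variety, but verifying it while keeping careful track of the correspondence between cones and tropical types of maps is where the real content lies. Once this extension is in hand, the closed embedding, the toric modification, and the compatibility with stratifications all fall out simultaneously.
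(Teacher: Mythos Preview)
Your proposal is correct and follows precisely the route the paper has in mind; the paper states the corollary without proof, treating it as immediate from the two preceding theorems, and your argument supplies exactly the details one would expect. Your one flagged concern, extending the subdivision from the subcomplex to the ambient fan, is handled by standard fan completion and is implicitly absorbed into the citation of \cite{R15b}, which the paper later invokes to assert that the toroidal fan of $\mathsf K_\Gamma(X)$ is a union of cones in the toric fan of $Y_\Gamma$.
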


\subsection{virtual fundamental classes to Minkowski weights}\label{sec: virtual-mw} We remind the reader that $M$ is a matroid that is realizable over a characteristic $0$ field, and $\Gamma$ is a choice of logarithmic Gromov--Witten discrete data, fixing the degree, number of markings, their contact orders, and the boundary divisor on the permutohedral toric variety, which for this section has been fixed to be the full toric variety. The variety $W$ is a wonderful model for an arrangement complement realizing $M$.

We now convert the class $[\mathsf K_\Gamma(W)]^{\mathsf{vir}}$ into a balanced polyhedral complex using the toric embedding above. It will be helpful to introduce a logarithmic Chow ring first. See~\cite{Bar18,MR21}. 

\begin{definition}
Let $Z$ be a logarithmic scheme. The \textit{logarithmic Chow ring of $Z$} with respect to its logarithmic structure is
\[
\ell A^\star(Z):=\varinjlim A^\star(Z')
\]
where the limit is taken over all logarithmic modifications, i.e. proper, logarithmically \'etale, and birational morphisms $Z'\to Z$, and transition maps given by cohomological pullback. 
\end{definition}

A basic property of the logarithmic Chow ring is that it is insensitive to replacing $Z$ with a further logarithmic modification. The logarithmic Chow ring of a toric variety $X$ has an elegant description. Let $N_{\RR}$ denote the cocharacter space of the dense torus of $X$. A \textit{tropical cycle on $N_{\RR}$} is a Minkowski weight on some complete fan in $N_{\RR}$. As an immediate consequence of the naturality of the identification of Chow cohomology with Minkowski weights, the logarithmic Chow ring of a toric variety $X$, equipped with its standard logarithmic structure, is identified with the set of all tropical cycles on $N_{\RR}$. The group and ring structure have natural descriptions as well, but we refer the reader to the literature for details~\cite{FS97,KP08}. 

Consider a logarithmic modification
\[
\pi: \cK_\Gamma(X)'\to \cK_\Gamma(X)
\]
of moduli spaces. Since the morphism $\cK_\Gamma(W)\hookrightarrow \cK_\Gamma(X)$ is strict, the scheme theoretic pullback defines a Cartesian diagram of fine and saturated logarithmic schemes
\[
\begin{tikzcd}
\cK_\Gamma(W)'\arrow{d}\arrow{r}{j} & \cK_\Gamma(X)'\arrow{d}{\pi}\\
\cK_\Gamma(W)\arrow{r}{i} & \cK_\Gamma(X).
\end{tikzcd}
\]
The moduli space $\cK_\Gamma(W)'$ is also equipped with a virtual fundamental class, which pushes forward to the virtual fundamental class of $\cK_\Gamma(W)$, see~\cite[Section~3.5]{R19}. 

\begin{proposition}
The following equality of classes holds in the Chow homology group of $\cK_\Gamma(X)'$:
\[
j_\star[\cK_\Gamma(W)']^{\mathsf{vir}} = c_{\mathsf{top}}(\pi^\star \mathbb F)\cap [\cK_\Gamma(X)']. 
\]
\end{proposition}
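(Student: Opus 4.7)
The plan is to replicate the proof of Theorem \ref{thm: virtual-compatibility} after base change along the modification $\pi$, as foreshadowed in Remark \ref{remarkVirtualCompatibilityAndModifications}. Three ingredients must each be carried through: the description of $\cK_\Gamma(W)'$ as the zero locus of a section of a vector bundle, the conormal sequence relating the relative obstruction theories, and the Kim--Kresch--Pantev compatibility of obstruction theories.

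First I would observe that $j: \cK_\Gamma(W)' \hookrightarrow \cK_\Gamma(X)'$ is a strict closed embedding cut out by a canonical section of $\pi^\star \mathbb F$. Strictness is preserved under Cartesian pullback of fine and saturated logarithmic schemes, so this follows from strictness of $i$. The section $s_{\mathbb F}$ from Theorem \ref{theoremBundleOnModuliSpace} pulls back to a section of $\pi^\star \mathbb F$, and its scheme-theoretic zero locus is the preimage of $\cK_\Gamma(W)$ under $\pi$, which by the Cartesian square is exactly $\cK_\Gamma(W)'$. Next I would pull the conormal sequence
\[
0\to N_{W/X}^\vee\to \Omega_X^{\mathsf{log}}|_W\to \Omega_W^{\mathsf{log}}\to 0
\]
back along the universal curves, noting that since $\pi$ is a logarithmic modification, and in particular strict, the universal curve and its logarithmic tangent bundle over $\cK_\Gamma(X)'$ are pulled back from $\cK_\Gamma(X)$. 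Taking derived pushforwards to $\cK_\Gamma(W)'$ and rotating produces the same commutative diagram of distinguished triangles as in the proof of Theorem \ref{thm: virtual-compatibility}, now comparing the two perfect obstruction theories on $\cK_\Gamma(W)'$: the one coming from the logarithmic stable maps theory, whose virtual class was constructed in~\cite[Section~3]{R19}, and the one realizing $\cK_\Gamma(W)'$ as the zero locus of a section of $\pi^\star \mathbb F$.

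Invoking~\cite{KKP} then forces the two obstruction theories to produce the same virtual class on $\cK_\Gamma(W)'$, and pushing forward along $j$ yields the claim, using that $\cK_\Gamma(X)'$ is logarithmically smooth (being a logarithmic modification of the logarithmically smooth $\cK_\Gamma(X)$) so its virtual class coincides with its fundamental class. The main point to double-check is that base change along $\pi$ genuinely commutes with the formation of $\mathbb F$ and with the formation of the Kim--Kresch--Pantev diagram, but this is automatic from strictness of $\pi$ and from the fact that the universal curve over $\cK_\Gamma(X)'$ is the pullback of the universal curve over $\cK_\Gamma(X)$, so that the cohomology and base change argument used to construct $\mathbb F$ applies verbatim to $\pi^\star \mathbb F$.
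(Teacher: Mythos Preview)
Your proposal is correct and follows essentially the same approach as the paper: the paper's proof is a two-sentence pointer to the argument of Theorem~\ref{thm: virtual-compatibility} via Remark~\ref{remarkVirtualCompatibilityAndModifications}, and you have spelled out precisely what that pointer entails (pullback of the section, the conormal sequence, and the Kim--Kresch--Pantev compatibility). Your additional remark that $\cK_\Gamma(X)'$ is logarithmically smooth, so its virtual class is its fundamental class, is a detail the paper leaves implicit but which is indeed needed for the statement as written.
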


\begin{proof}
The substack $\cK_\Gamma(W)'$ in $\cK_\Gamma(X)'$ is cut out by a section of the pullback bundle $\pi^\star \mathbb F$. The equality of classes now follows by the virtual compatibility argument in subsection~\ref{sectionLogQuantumLefschetz}, see Remark~\ref{remarkVirtualCompatibilityAndModifications}. 
\end{proof}

Let $\pi: \cK_\Gamma(X)'\to \cK_\Gamma(X)$ be a logarithmic modification with nonsingular source. Invoking the Poincare duality isomorphism, we can identify the pushforward of the virtual fundamental class of $\cK_\Gamma(W)'$ with the top Chern class of the vector bundle $\pi^\star\mathbb F$. Since the logarithmic Chow ring is insensitive to replacement of the space by a logarithmic modification, the proposition immediately implies that there is an element
\[
[\cK_\Gamma(W)]^{\mathsf{log}} \ \ \textrm{in} \ \ \ell A^\star(\cK_\Gamma(X)).
\]

Recall that we have constructed a sequence of strict closed embeddings
\[
\cK_\Gamma(W)\hookrightarrow \cK_\Gamma(X)\hookrightarrow Y_\Gamma.
\]

\begin{lemma}
Every logarithmic modification of $\cK_\Gamma(X)$ is pulled back from a toric modification of $Y_\Gamma$. 
\end{lemma}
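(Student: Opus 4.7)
The plan is to use the correspondence between logarithmic modifications and subdivisions of cone complexes (Artin fans), together with the strictness of the embedding $\iota : \cK_\Gamma(X) \hookrightarrow Y_\Gamma$ of the previous corollary.

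First I would invoke the general principle, due to Abramovich--Wise and phrased in terms of Artin fans, that for a fine and saturated logarithmic algebraic stack $Z$ a proper birational morphism $Z' \to Z$ is a logarithmic modification if and only if it is obtained by strict base change, along the canonical map $Z \to \mathcal A_Z$, from a modification of the Artin fan $\mathcal A_Z$; equivalently, log modifications of $Z$ correspond bijectively to subdivisions of the associated cone complex $\Sigma_Z$. Applied to $Z = \cK_\Gamma(X)$, this translates the datum of a logarithmic modification into that of a subdivision of the cone complex whose cones index tropical types of maps.

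Next I would use the strictness of the closed embedding $\iota$. The preceding corollary matches the toric stratification of $Y_\Gamma$ with the stratification of $\cK_\Gamma(X)$ by tropical type, which is precisely the assertion that the logarithmic structure on $\cK_\Gamma(X)$ is pulled back from the toric logarithmic structure on $Y_\Gamma$. Consequently, the structure map $\cK_\Gamma(X) \to \mathcal A_{\cK_\Gamma(X)}$ factors through the toric Artin stack $\mathcal A_{Y_\Gamma} = [Y_\Gamma/T]$, and the cone complex $\Sigma_{\cK_\Gamma(X)}$ is identified with a subfan of $\Sigma_{Y_\Gamma}$. A subdivision of $\Sigma_{\cK_\Gamma(X)}$ then extends, trivially, to a subdivision of $\Sigma_{Y_\Gamma}$ by leaving the cones outside $\Sigma_{\cK_\Gamma(X)}$ untouched; the resulting refinement is the fan of an equivariant toric modification $Y_\Gamma' \to Y_\Gamma$, and base change along $\iota$ recovers the original log modification.

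The main obstacle is the strictness step: verifying that the cone complex of $\cK_\Gamma(X)$ genuinely embeds as a subcomplex of $\Sigma_{Y_\Gamma}$ rather than merely admitting a morphism to it. This amounts to checking that the torus-invariant boundary of $Y_\Gamma$ pulls back scheme-theoretically, and with the correct combinatorial structure on its strata, to the tropical-type boundary of $\cK_\Gamma(X)$, which is built into the construction of the embedding recorded in the preceding two theorems. Once strictness is established, the extension of subdivisions and the identification of log modifications as pullbacks of toric modifications are formal consequences of the Artin-fan formalism.
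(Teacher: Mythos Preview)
Your proposal is correct and follows essentially the same approach as the paper. The paper's proof is terse: it notes that logarithmic modifications correspond to subdivisions of the toroidal fan, cites \cite{R15b} for the fact that the toroidal fan of $\cK_\Gamma(X)$ is a union of cones in the toric fan of $Y_\Gamma$ under the inclusion, and concludes. Your argument unwinds this in the Artin-fan language and makes explicit the trivial extension of a subdivision to the remaining cones, but the underlying logic is identical.
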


\begin{proof}
Every logarithmic modification of $\cK_\Gamma(X)$ is induced by a subdivision of its toroidal fan. It is proved in~\cite{R15b} that the toroidal fan of $\cK_\Gamma(X)$ is a union of cones in the toric fan of $Y_\Gamma$, with the identification induced by the inclusion $\cK_\Gamma(X)\hookrightarrow Y_\Gamma$. The result follows. 
\end{proof}

The logarithmic virtual fundamental class gives rise to a logarithmic cohomology class on the toric variety $Y_\Gamma$. Recall that we have fixed a modification $\pi: \cK_\Gamma(X)'\to \cK_\Gamma(X)$. We may choose a modification of $Y_\Gamma$ that induces it, to produce a sequence of modifications:
\[
\begin{tikzcd}
\cK_\Gamma(W)' \arrow[hookrightarrow]{r}\arrow{d} & \cK_\Gamma(X)' \arrow[hookrightarrow]{r}\arrow{d} & Y'_\Gamma\arrow{d}\\
\cK_\Gamma(W) \arrow[hookrightarrow]{r}& \cK_\Gamma(X) \arrow[hookrightarrow]{r}& Y_\Gamma
\end{tikzcd}
\]
Note that both squares are cartesian. Moreover, if we insist that $Y'_\Gamma\to Y_\Gamma$ is a resolution of singularities, then by pushing forward the virtual fundamental class of $\cK_\Gamma(W)'$ to $Y'_\Gamma$ and applying Poincare duality, we obtain a Chow cohomology class $c'_\Gamma(M)$, and we view it in the logarithmic Chow ring $\ell A^\star(Y_\Gamma)$. 

\begin{proposition}
The element $c'_\Gamma(M)$ in $\ell A^\star(Y_\Gamma)$ is independent of the toric resolution $Y'_\Gamma\to Y_\Gamma$. 
\end{proposition}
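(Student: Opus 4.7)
The plan is to compare the representatives of $c'_\Gamma(M)$ obtained from two resolutions by descending to a common smooth refinement. Let $Y_1 \to Y_\Gamma$ and $Y_2 \to Y_\Gamma$ be two toric resolutions producing classes $c_i \in A^\star(Y_i)$. I would choose a smooth toric variety $Y_3$ whose fan refines the fans of both $Y_1$ and $Y_2$ (take a common refinement and resolve toroidally), fitting into toric modifications $\phi_i : Y_3 \to Y_i$ for $i=1,2$. Since $\ell A^\star(Y_\Gamma)$ is defined as a direct limit over toric modifications, it suffices to show $\phi_1^\star c_1 = c_3 = \phi_2^\star c_2$, where $c_3$ is the class constructed from the resolution $Y_3 \to Y_\Gamma$.

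The next step is to identify each $c_i$ as a Gysin pushforward. Because $\cK_\Gamma(X) \hookrightarrow Y_\Gamma$ is a strict closed embedding of logarithmically smooth schemes, the fiber product $\cK_\Gamma(X)_i := \cK_\Gamma(X) \times_{Y_\Gamma} Y_i$ coincides with the induced logarithmic modification, and the strict embedding $\iota_i : \cK_\Gamma(X)_i \hookrightarrow Y_i$ is regular of codimension $\dim Y_\Gamma - \dim \cK_\Gamma(X)$. Writing $\pi_i : \cK_\Gamma(X)_i \to \cK_\Gamma(X)$ for the induced modification, the proposition proved immediately above together with Remark~\ref{remarkVirtualCompatibilityAndModifications} gives the identity $[\cK_\Gamma(W)_i]^{\mathsf{vir}} = c_{\mathsf{top}}(\pi_i^\star \mathbb F) \cap [\cK_\Gamma(X)_i]$ in $A_\star(\cK_\Gamma(X)_i)$. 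Pushing forward along $\iota_i$ and applying Poincar\'e duality on the smooth target $Y_i$ then identifies $c_i$ with the Gysin pushforward $\iota_{i,\star}(c_{\mathsf{top}}(\pi_i^\star \mathbb F)) \in A^\star(Y_i)$.

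With this identification, writing $\tilde\phi_1 : \cK_\Gamma(X)_3 \to \cK_\Gamma(X)_1$ for the induced map on moduli spaces, the equality $\phi_1^\star c_1 = c_3$ unfolds as
\[
\phi_1^\star c_1 = \phi_1^\star \iota_{1,\star}(c_{\mathsf{top}}(\pi_1^\star \mathbb F)) = \iota_{3,\star}\bigl(\tilde\phi_1^\star c_{\mathsf{top}}(\pi_1^\star \mathbb F)\bigr) = \iota_{3,\star}(c_{\mathsf{top}}(\pi_3^\star \mathbb F)) = c_3.
\]
Here the middle equality is proper base change for the Cartesian square formed by $\iota_1, \iota_3, \phi_1, \tilde\phi_1$, and the subsequent equality uses naturality of Chern classes along the identification $\pi_3 = \pi_1 \circ \tilde\phi_1$. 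The symmetric argument proves $\phi_2^\star c_2 = c_3$, completing the comparison.

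The main obstacle is the proper base change step in the central computation. Its validity depends on Tor-independence of the Cartesian square of strict closed embeddings and toric modifications, which holds here because $\iota_1$ and $\iota_3$ are regular embeddings of the same codimension, so that the excess normal bundle vanishes and the excess intersection formula collapses to ordinary base change. An alternative, more intrinsic packaging would avoid the base change question altogether by viewing $c_{\mathsf{top}}(\mathbb F)$ as a class on the logarithmically smooth stack $\cK_\Gamma(X)$ itself, then pushing it forward to $\ell A^\star(Y_\Gamma)$ via the strict closed embedding $\cK_\Gamma(X) \hookrightarrow Y_\Gamma$ in a manifestly resolution-independent way.
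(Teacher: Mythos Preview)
Your argument is correct and follows essentially the same route as the paper: reduce to comparing two resolutions via a common smooth refinement, identify the class on each resolution with the top Chern class of the pullback of $\mathbb F$ capped with the fundamental class, and then invoke a base-change/excess-intersection compatibility across the Cartesian square. The only stylistic difference is that you work in operational Chow cohomology using Gysin pushforward along the regular embeddings $\iota_i$, whereas the paper works in Chow homology using Gysin pullback along the codimension-$0$ lci modifications $Y_\Gamma''\to Y_\Gamma'$ and $\cK_\Gamma(X)''\to \cK_\Gamma(X)'$; these are Poincar\'e dual formulations of the same excess-intersection vanishing. One small point worth making explicit: your assertion that $\iota_i$ is a regular embedding relies on $\cK_\Gamma(X)_i$ being smooth, which the paper secures by noting that the tropicalization of $\cK_\Gamma(X)$ is a union of cones of the fan of $Y_\Gamma$, so a smooth toric resolution of $Y_\Gamma$ induces a smooth logarithmic modification of $\cK_\Gamma(X)$.
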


\begin{proof}
Since any two birational modifications of $Y_\Gamma$ can be dominated by a resolution of singularities of their common refinement, it suffices to check that if $p: Y_\Gamma''\to Y_\Gamma'$ is a further toric modification by a smooth toric variety, then the class $c_\Gamma''(M)$ is equal to $c_\Gamma'(M)$ in $\ell A^\star(Y_\Gamma)$. By unwinding the definition of the direct limit, it suffices to show an equality
\[
p^\star c_\Gamma'(M) = c_\Gamma''(M) \ \ \textrm{in} \ \ A^\star(Y_\Gamma'').
\]
Since $\cK_\Gamma(X)\hookrightarrow Y_\Gamma$ is strict, we have the pullback square
\[
\begin{tikzcd}
\cK_\Gamma(X)'' \arrow[hookrightarrow]{r}\arrow{d} & Y''_\Gamma\arrow{d}\\
 \cK_\Gamma(X)' \arrow[hookrightarrow]{r}& Y'_\Gamma.
\end{tikzcd}
\]
The two vertical arrows have smooth source and target, and therefore are both local complete intersection morphisms of codimension $0$, and the corresponding Gysin pullbacks from the Chow groups of $\cK_\Gamma(X)'$ to those of $\cK_\Gamma(X)''$ coincide. The virtual fundamental classes of $\cK_\Gamma(W)''$ and $\cK_\Gamma(W)'$, after pushforward to the corresponding spaces of maps to $X$, are both equal to the top Chern class of the vector bundle $\mathbb F$, pulled back from $\cK_\Gamma(X)$. The claim above is now a consequence of compatibility of proper pushforward and Gysin pullback. The statement in the proposition follows. 
\end{proof}

\begin{definition}[The virtual weight]
The \textit{virtual Minkowski weight} of a matroid $M$ in discrete data $\Gamma$ as above, is the Minkowski weight determined by the logarithmic Chow class $c_\Gamma(M)$ in the logarithmic Chow ring $\ell A^\star(Y_\Gamma)$. 
\end{definition}

\begin{remark}
Note that the image of $[\mathsf K_\Gamma(W)']_\Gamma^{\mathsf{vir}} $ in $A_\star(\mathsf K_\Gamma(X)')$ is equal to the Poincar\'e dual of the pullback of the class $e_\Gamma(M)$ defined in subsection \ref{subsectionVFCOfMatroid}. In particular, $c_\Gamma(M)$ depends only on the discrete data $\Gamma$ and the matroid $M$ and even makes sense when $M$ is not realizable.
\end{remark}

\subsection{Reconstruction: Gromov--Witten theory from the virtual weights}\label{sec: reconstruction} We maintain the notation of the previous section. We show that they completely determine the logarithmic Gromov--Witten invariants. The discrete data $\Gamma$ determines, for each marked point $p_i$ a stratum $F_i\subset W$, given by the intersection of all divisors with respect to which $p_i$ has positive contact order. If $p_i$ has zero contact with all divisors, then $F_i$ is defined to be $W$ itself. We recall that the moduli space $\cK_\Gamma(W)$ admits evaluation morphisms and forgetful morphisms to $\Mbar_{0,n}$:
\[
\mathsf{ev}_i: \cK_\Gamma(W)\to F_i \ \ \ \epsilon: \cK_\Gamma(W)\to \Mbar_{0,n}. 
\]
Let $\psi_i$ denote the first Chern class of the cotangent line bundles associated to the $i$th marked point in $\Mbar_{0,n}$. Given cohomology classes $\gamma_i$ in $A^\star(F_i)$ and positive integers $k_1,\ldots, k_n$, we define the logarithmic Gromov--Witten invariants\footnote{We have defined the ancestor theory here. The descendent theory differs from this by boundary corrections, and may also be treated by these methods, but additional bookkeeping which we wish to avoid here.} by:
\[
\langle \tau_{k_1}(\gamma_1),\ldots,\tau_{k_n}(\gamma_n)\rangle := \int_{[\cK_\Gamma(W)]^{\mathsf{vir}}} \prod_{i=1}^n \psi_i^{k_i} \mathsf{ev}_i^\star \gamma_i. 
\]

Our main result in this section is to show that these invariants may all be computed from the virtual weight $c_\Gamma(M)$ in the cocharacter space of the fan of $Y_\Gamma$. Specifically, for each operator $\tau_{k_i}(\gamma_i)$ we will construct a Minkowski weight $t_{k_i}(\gamma_i)$ on the fan of the toric variety $Y_\Gamma$ and establish the following reconstruction theorem. 

\begin{theorem}\label{thm: unique-reconstruction}
The logarithmic Gromov--Witten invariant $\langle \tau_{k_1}(\gamma_1),\ldots,\tau_{k_n}(\gamma_n)\rangle$ is computed by the tropical intersection product of the Minkowski weight $c_\Gamma(M)$ with the Minkowski weights $t_{k_i}(\gamma_i)$. 
\end{theorem}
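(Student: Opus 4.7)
The plan is to transport the entire integral from $\cK_\Gamma(W)$ onto the toric variety $Y_\Gamma$ via the strict closed embedding $\cK_\Gamma(W)\hookrightarrow \cK_\Gamma(X)\hookrightarrow Y_\Gamma$, and then recognize the resulting intersection number as a tropical intersection of Minkowski weights on the fan of $Y_\Gamma$. The first step is to replace the integrand with a pullback of toric data. For each marked point $p_i$, observe that the evaluation $\mathsf{ev}_i:\cK_\Gamma(W)\to F_i$ factors through the composition $\cK_\Gamma(W)\hookrightarrow \cK_\Gamma(X)\to X$; here the second arrow is the ordinary evaluation into the toric target. Since $F_i$ is a stratum of $W$ and $A^\star(W)$ is generated by (restrictions of) divisor classes of $X$ by~\cite[Theorem~3]{FY04}, we may lift each $\gamma_i\in A^\star(F_i)$ to a class $\widetilde{\gamma_i}$ on the appropriate toric stratum of $X$, i.e.\ to a Minkowski weight on the corresponding star fan in $\Delta_n$. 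Pulling $\widetilde{\gamma_i}$ back along the toric evaluation gives a tropical cycle on the fan of $Y_\Gamma$.

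For the $\psi$-classes I would use the forgetful morphism $\epsilon:\cK_\Gamma(W)\to \Mbar_{0,n}$, which factors through the toric embedding $\Mbar_{0,n}\hookrightarrow Y_{0,n}$ after a suitable toric modification of $Y_{0,n}$, and hence through $Y_\Gamma$ after a logarithmic modification. The key representability result I would invoke is that on the logarithmic Chow ring of $\Mbar_{0,n}$, each power $\psi_i^{k_i}$ is represented by an explicit tropical cycle on the fan of $Y_{0,n}$; this is classical in tropical geometry, see~\cite{CMR14b,R15b} and the literature on logarithmic $\psi$-classes. Pulling this back to the fan of $Y_\Gamma$ via the fan map $Y_\Gamma\to Y_{0,n}\times X$, and cupping with the pullback of $\widetilde{\gamma_i}$ from the $X$-factor, defines the Minkowski weight $t_{k_i}(\gamma_i)$.

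Next I would assemble the computation using the projection formula. Choose a simultaneous smooth logarithmic modification $\cK_\Gamma(W)'\hookrightarrow \cK_\Gamma(X)'\hookrightarrow Y'_\Gamma$ on which all of the above tropical cycles are represented by honest Chow cohomology classes. By the virtual compatibility established in the proposition preceding the definition of $c_\Gamma(M)$, the pushforward of $[\cK_\Gamma(W)']^{\mathsf{vir}}$ to $\cK_\Gamma(X)'$ is Poincar\'e dual to $c_{\mathsf{top}}(\pi^\star\mathbb{F})$, which, viewed on $Y'_\Gamma$, is precisely the cocycle representing $c_\Gamma(M)$ in $\ell A^\star(Y_\Gamma)$. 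Since the evaluation and forgetful classes on $\cK_\Gamma(W)'$ are strict pullbacks of the toric classes defining $t_{k_i}(\gamma_i)$, the projection formula identifies
\[
\int_{[\cK_\Gamma(W)']^{\mathsf{vir}}}\prod_{i=1}^n \psi_i^{k_i}\,\mathsf{ev}_i^\star\gamma_i
=\int_{Y'_\Gamma} c_\Gamma(M)\cdot \prod_{i=1}^n t_{k_i}(\gamma_i),
\]
and the Fulton--Sturmfels theorem rewrites the right side as the tropical intersection number on the fan of $Y_\Gamma$.

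The main obstacle I expect is the careful treatment of $\psi$-classes: they are not toric divisor classes on $Y_{0,n}$, so one needs to lift them to the logarithmic Chow ring after passing to a sufficiently fine logarithmic modification where they become representable by tropical cycles, and then check that this representative is compatible with the logarithmic modifications used to define $c_\Gamma(M)$. A secondary subtlety is verifying that the lift of $\gamma_i$ from $F_i$ to a toric Minkowski weight is unambiguous at the level of the intersection number; this should follow from the same type of projection formula argument combined with the fact that $A^\star(W)\cong A^\star(U)$ is a quotient of $A^\star(X)$. Once these representability issues are handled, the rest is a bookkeeping exercise in comparing Chow-cohomological intersection with tropical (Fulton--Sturmfels) intersection, which is functorial under logarithmic modifications and hence descends to an assertion purely about the Minkowski weight $c_\Gamma(M)$ and the combinatorial weights $t_{k_i}(\gamma_i)$.
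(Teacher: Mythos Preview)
Your overall strategy is correct and matches the paper's: lift each factor in the integrand to a Chow class on $Y_\Gamma$, then apply the projection formula so that the integral becomes a toric intersection number, hence a tropical intersection of Minkowski weights via Fulton--Sturmfels.

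Two points where your proposal diverges from the paper's treatment are worth flagging. First, the obstacle you single out---representing $\psi_i$ by a tropical cycle only after passing to a logarithmic modification---is not actually an obstacle. The paper appeals to Katz~\cite[Section~7]{Katz09}, who shows that each $\psi_i\in A^\star(\Mbar_{0,n})$ is already the restriction of an honest Chow class on $Y_{0,n}$; no modification of $Y_{0,n}$ is needed for this step. So your machinery of passing to the logarithmic Chow ring for the $\psi$-classes is unnecessary, and your ``main obstacle'' dissolves. Second, you assert that the evaluation $\cK_\Gamma(X)\to X$ suffices to pull back $\widetilde{\gamma_i}$, but the integrand requires classes on $Y_\Gamma$, not on $\cK_\Gamma(X)$. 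The paper supplies the missing step: the evaluation $\mathsf{ev}_i:\cK_\Gamma(X)\to E_i$ extends to an equivariant toric morphism $\mathsf{e}_i:Y_\Gamma^\circ\to E_i$ (proved via the intrinsic torus of $\cM_{0,n}$ times a torus), and then excision along $Y_\Gamma^\circ\hookrightarrow Y_\Gamma$ lifts $\mathsf{e}_i^\star\widetilde{\gamma_i}$ to $A^\star(Y_\Gamma)$. Without this, your sentence ``pulling $\widetilde{\gamma_i}$ back along the toric evaluation gives a tropical cycle on the fan of $Y_\Gamma$'' is unjustified. Once these two lemmas are in place, the proof is exactly the one-line projection-formula argument you gave; your discussion of compatible modifications and the representative $c_{\mathsf{top}}(\pi^\star\mathbb F)$ for $c_\Gamma(M)$ is correct but more elaborate than the paper's proof requires.
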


In other words, the balanced weighted fan $c_\Gamma(M)$ uniquely reconstructs the logarithmic Gromov--Witten theory of $W$. The remainder of this subsection is dedicated to proving this theorem. 

We recall several basic closed embeddings. The first is the inclusion of the wonderful model $W\hookrightarrow X$. We have also constructed a sequence of strict closed embeddings
\[
\cK_\Gamma(W)\hookrightarrow \cK_\Gamma(X)\hookrightarrow Y_\Gamma,
\]
as well as a strict closed embedding
\[
\Mbar_{0,n}\hookrightarrow Y_{0,n}. 
\]

We first note that the cotangent classes have precursors in the toric variety $Y_{0,n}$. 

\begin{lemma}
The Chow cohomology class $\psi_i$ is the pullback of a Chow cohomology class on $Y_{0,n}$ along the inclusion $\Mbar_{0,n}\hookrightarrow Y_{0,n}$.
\end{lemma}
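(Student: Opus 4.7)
The plan is to exhibit $\psi_i$ as the pullback of an explicit boundary class on $Y_{0,n}$, using a standard tautological identity on $\Mbar_{0,n}$. First, I would invoke the classical boundary formula for the $\psi$--class on $\Mbar_{0,n}$: for any pair of distinct indices $j, k \in \{1, \dots, n\}\setminus\{i\}$,
\[
\psi_i \;=\; \sum_{\substack{i\in S,\ j,k\notin S \\ 2 \leq |S| \leq n-2}} \delta_S \qquad \text{in } A^1(\Mbar_{0,n}),
\]
where $\delta_S$ is the boundary divisor whose generic point parameterizes a two-component nodal rational curve with markings indexed by $S$ on one component and by $S^c$ on the other. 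This writes $\psi_i$ as an integer linear combination of boundary divisor classes.

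Next, I would invoke the stratification compatibility recalled in the theorem above: the toric stratification of $Y_{0,n}$ pulls back to the stratification of $\Mbar_{0,n}$ by the topological type of the universal curve. In particular, each boundary divisor $\delta_S$ is the scheme-theoretic intersection of $\Mbar_{0,n}$ with the closure of a uniquely determined toric stratum of $Y_{0,n}$. Since the embedding $\iota: \Mbar_{0,n}\hookrightarrow Y_{0,n}$ is strict, this intersection has the correct codimension and is reduced, so $\delta_S = \iota^\star \widetilde{\delta}_S$ for a well-defined operational Chow cohomology class $\widetilde{\delta}_S \in A^\star(Y_{0,n})$.

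Summing over the index set above, the class $\widetilde{\psi}_i := \sum_S \widetilde{\delta}_S \in A^\star(Y_{0,n})$ then satisfies $\iota^\star \widetilde{\psi}_i = \psi_i$ by linearity of pullback, which is the desired statement. The main subtlety I anticipate is verifying that each $\delta_S$ really does arise as a scheme-theoretic pullback with multiplicity one, rather than appearing with an extra intersection multiplicity or accompanied by spurious components. This is precisely where strictness of $\iota$ is essential: because the logarithmic structure on $\Mbar_{0,n}$ is induced from that of $Y_{0,n}$, the combinatorial type of a stratum of $\Mbar_{0,n}$ is read off from the type of the ambient toric stratum, and the identification of boundary classes with the pullbacks of toric cycles holds on the nose.
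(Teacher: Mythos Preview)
Your argument is correct. The paper's own proof is a one-line citation to \cite[Section~7]{Katz09}, and what you have written is essentially the content of that reference: Katz writes $\psi_i$ via the standard boundary expression $\psi_i = \sum_{i\in S,\; j,k\notin S}\delta_S$ and identifies each $\delta_S$ with the restriction of a torus-invariant divisor on the ambient toric variety. Your emphasis on strictness of $\iota$ to guarantee that the pullback of the toric divisor is $\delta_S$ on the nose (multiplicity one, no extra components) is exactly the right justification, and it is implicit in the statement from the paper that the toric stratification of $Y_{0,n}$ pulls back to the dual-graph stratification of $\Mbar_{0,n}$.
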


\begin{proof}
This is proved in~\cite[Section~7]{Katz09}. 
\end{proof}

We next note that each constraint in $W$ has a precursor as well. 

\begin{lemma}
Let $\gamma$ be a class in $A^\star(W)$. There exists a cohomology class $\underline \gamma$ in $A^\star(X)$ whose image under the pullback map induced by $W\hookrightarrow X$ is $\gamma$. 
\end{lemma}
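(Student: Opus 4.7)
The plan is to factor the inclusion $\iota: W \hookrightarrow X$ as $\rho \circ \eta$, where $\eta: W \hookrightarrow U = X(\Delta_M)$ and $\rho: U \hookrightarrow X$ are the inclusions introduced in Section~3. By the theorem of Feichtner--Yuzvinsky cited repeatedly above, $\eta^\star: A^\star(U) \to A^\star(W)$ is an isomorphism of graded rings, so the problem reduces to showing that $\rho^\star: A^\star(X) \to A^\star(U)$ is surjective; the class $\underline{\gamma}$ is then any preimage under $\rho^\star$ of $(\eta^\star)^{-1}(\gamma)$.

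Surjectivity of $\rho^\star$ in degree one was already recorded in the proof of Proposition~\ref{PropositionImageOfCurveClassesIsCombinatorial}: since $U$ is the open toric subvariety of the smooth toric variety $X$ cut out by the subfan $\Delta_M \subset \Delta_n$, every torus-invariant divisor on $U$ is the restriction of a torus-invariant divisor on $X$. To extend this to surjectivity in all degrees, I will invoke the standard fact that the Chow ring of a smooth toric variety is generated as a ring by its degree-one classes; for $A^\star(U)$ this is also immediate from the Feichtner--Yuzvinsky presentation of the Chow ring of a matroid as a quotient of a polynomial ring in variables indexed by flats, all of degree one. Since $\rho^\star$ is a ring homomorphism whose image contains a set of ring generators of $A^\star(U)$, this image must equal $A^\star(U)$.

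No substantive obstacle is expected: the argument is essentially a ring-level upgrade of the degree-one surjectivity already exploited in Section~3, combined with generation of $A^\star(U)$ in degree one. The only mild care needed is to note that lifting a polynomial expression in generators involves a choice of lift for each generator; any two such choices produce classes in $A^\star(X)$ differing by elements of $\ker \iota^\star$, and any such choice suffices for the conclusion.
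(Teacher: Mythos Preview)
Your proof is correct and follows essentially the same strategy as the paper: factor $W\hookrightarrow X$ through the open toric subvariety $U=X(\Delta_M)$, use the Feichtner--Yuzvinsky isomorphism $A^\star(U)\cong A^\star(W)$, and then argue that $A^\star(X)\to A^\star(U)$ is surjective. The only minor difference is in this last step: the paper obtains surjectivity directly from the excision sequence for the open immersion $U\hookrightarrow X$, whereas you deduce it from degree-one surjectivity plus generation of $A^\star(U)$ in degree one; both are valid, but the excision argument is more economical.
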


\begin{proof}
The Bergman fan of $W$ determines a union of cones in $\Delta_n$ and therefore an invariant open toric variety $U\hookrightarrow X$ and a factorization
\[
W\hookrightarrow U\hookrightarrow X
\]
into a closed embedding followed by an open embedding. The pullback morphism 
\[
A^\star(U)\to A^\star(W)
\]
is an isomorphism~\cite[Theorem~3]{FY04}. The pullback morphism
\[
A^\star(X) \to A^\star(U)
\]
is surjective by the excision sequence. The result follows. 
\end{proof}

In fact, we require a mild strengthening of the above lemma. Let $F\subset W$ be a stratum. The embedding $W\hookrightarrow X$ determines an embedding
\[
F\hookrightarrow E
\]
where $E$ is the smallest closed torus orbit containing $F$. Note that $E$ is a stratum in the toric variety associated to the permutohedron, and is therefore a product of smaller dimensional permutohedral toric varieties. Similarly, $F$ is a stratum in a wonderful model, and is therefore a product of smaller wonderful models~\cite[Section~4.3]{dCP95}. Moreover, the induced map
\[
F_1\times\cdots\times F_s\hookrightarrow E_1\times\cdots\times E_s
\]
is a product of embeddings of wonderful models. Since both the source and target are linear varieties in the sense of Totaro, they both satisfy the Chow--K\"unneth property~\cite[Section~3]{Tot14}. 

\begin{lemma}
Let $F\hookrightarrow E$ be as above and let $\gamma$ be a class in $A^\star(F)$. There exists a cohomology class $\underline \gamma$ in $A^\star(E)$ whose image under the pullback map induced by $F\hookrightarrow E$ is $\gamma$. 
\end{lemma}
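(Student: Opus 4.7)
The plan is to reduce the statement to the single-factor case handled by the immediately preceding lemma by exploiting the Chow--K\"unneth property already invoked in the setup. Since $F = F_1 \times \cdots \times F_s$ is a product of wonderful models and $E = E_1 \times \cdots \times E_s$ is a product of permutohedral toric varieties, and since each $F_i, E_i$ is a linear variety in the sense of Totaro, the Chow--K\"unneth maps give isomorphisms
\[
A^\star(F) \cong \bigotimes_{i=1}^s A^\star(F_i), \qquad A^\star(E) \cong \bigotimes_{i=1}^s A^\star(E_i),
\]
and under these identifications the pullback homomorphism induced by the product embedding $F \hookrightarrow E$ is identified with the tensor product of the individual pullback homomorphisms $A^\star(E_i) \to A^\star(F_i)$.

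The next step is to observe that each factor map $A^\star(E_i) \to A^\star(F_i)$ is surjective. Each factor $F_i$ is itself a wonderful model inside the permutohedral toric variety $E_i$ (for a smaller matroid, namely a minor of $M$), so the previous lemma applies verbatim to each factor: the pullback from $A^\star(E_i)$ factors through the Chow cohomology of the toric open subvariety $U_i \subset E_i$ determined by the relevant Bergman subfan via the isomorphism of Feichtner--Yuzvinsky, and surjects onto $A^\star(F_i)$ thanks to the excision sequence.

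Finally, the tensor product of surjective maps of $\QQ$-vector spaces (or, after choosing bases, of free abelian groups) is surjective, so the composite
\[
A^\star(E) \cong \bigotimes_{i=1}^s A^\star(E_i) \twoheadrightarrow \bigotimes_{i=1}^s A^\star(F_i) \cong A^\star(F)
\]
is surjective. Given $\gamma \in A^\star(F)$, write it as a finite sum of pure tensors $\gamma = \sum_j \gamma_{1,j} \otimes \cdots \otimes \gamma_{s,j}$, lift each $\gamma_{i,j}$ to a class $\underline{\gamma}_{i,j} \in A^\star(E_i)$ using the previous lemma, and take $\underline{\gamma} = \sum_j \underline{\gamma}_{1,j} \otimes \cdots \otimes \underline{\gamma}_{s,j} \in A^\star(E)$; this pulls back to $\gamma$ by construction.

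The only genuine obstacle is verifying that the Chow--K\"unneth formula can be used in this compatible way; this is precisely the content of Totaro's result for linear varieties cited in the paper, so the proof is essentially a bookkeeping exercise once that input is in hand.
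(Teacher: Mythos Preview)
Your proposal is correct and follows exactly the approach the paper takes: the paper's proof reads in full ``Apply the Chow--K\"unneth property and reduce to the previous lemma,'' and you have simply unpacked what that sentence means.
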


\begin{proof}
Apply the Chow--K\"unneth property and reduce to the previous lemma. 
\end{proof}

There is a largest torus invariant open subscheme of $Y_\Gamma$ that contains $\cK_\Gamma(X)$, obtained as the union of all torus orbits intersecting $\cK_\Gamma(X)$. Denote this toric variety by $Y_\Gamma^\circ$. The following result controls the evaluation morphism.

\begin{lemma}
Let $\mathsf{ev}_i: \cK_\Gamma(X)\to E$ be an evaluation morphism. There is a factorization
\[
\begin{tikzcd}
\cK_\Gamma(X)\arrow{r}{\mathsf{ev}_i}\arrow[hookrightarrow]{d} & E \\
Y_\Gamma^\circ\arrow[swap]{ur}{\mathsf{e}_i} & 
\end{tikzcd}
\]
where $\mathsf e_i$ is an equivariant toric morphism. 
\end{lemma}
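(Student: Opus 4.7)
The plan is to first recognize the evaluation morphism $\mathsf{ev}_i$ as the restriction to $\cK_\Gamma(X)$ of an equivariant toric morphism defined on all of $Y_\Gamma$, and then use $T$-equivariance to verify the factorization through $E$.

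First, we exploit the construction of the toric variety $Y_\Gamma$ in~\cite{R15b}. Its fan parameterizes combinatorial types of tropical maps from marked rational tropical curves to $\Delta_n$, and the $i$-th marked point together with its image in $N_{\RR}$ determines a compatible homomorphism of cocharacter lattices. This yields an equivariant toric morphism $\widetilde{\mathsf{e}}_i : Y_\Gamma \to X(\Delta_n)$ whose restriction to $\cK_\Gamma(X) \hookrightarrow Y_\Gamma$ coincides with the geometric evaluation $\mathsf{ev}_i$. Restricting $\widetilde{\mathsf{e}}_i$ to the torus-invariant open subvariety $Y_\Gamma^\circ \subset Y_\Gamma$ produces an equivariant toric morphism $Y_\Gamma^\circ \to X$.

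Second, we check that this extension factors through $E$. By the Corollary above, the toric stratification of $Y_\Gamma$ pulls back to the stratification of $\cK_\Gamma(X)$ by tropical type, so every torus orbit of $Y_\Gamma^\circ$ meets $\cK_\Gamma(X)$. Choose such an orbit $O_\sigma \subset Y_\Gamma^\circ$ and a point $p \in O_\sigma \cap \cK_\Gamma(X)$. The positive contact orders at $p_i$ recorded by $\Gamma$ force $\mathsf{ev}_i(p)$ to lie in the intersection of the corresponding toric boundary divisors of $X$, and since this intersection is precisely $E$ (the smallest closed torus orbit of $X$ containing the wonderful stratum $F_i$), we have $\widetilde{\mathsf{e}}_i(p) \in E$. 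Because $\widetilde{\mathsf{e}}_i$ is equivariant and $E$ is a $T$-invariant closed subscheme of $X$, the whole orbit $O_\sigma$ is sent into $E$. Ranging over all orbits of $Y_\Gamma^\circ$ gives the required factorization $\mathsf{e}_i : Y_\Gamma^\circ \to E$, and equivariance is inherited from $\widetilde{\mathsf{e}}_i$ by composing with the quotient homomorphism of tori from that of $X$ onto that of $E$.

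The main obstacle is the identification in the first step: establishing that $\mathsf{ev}_i$ extends to an equivariant toric morphism on $Y_\Gamma$ requires unpacking the construction of $Y_\Gamma$ in~\cite{R15b}, where the fan is engineered precisely so that evaluation at the $i$-th marked point is recorded combinatorially as a map of cocharacter lattices. Once this identification is in place, the remainder of the argument reduces to the orbit–cone correspondence and $T$-equivariance.
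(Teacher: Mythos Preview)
Your approach is different from the paper's, and as written it contains a genuine gap in the first step. The paper does not unpack the fan of $Y_\Gamma$ at all; it instead invokes the \emph{intrinsic torus} of a very affine variety. The interior of $\cK_\Gamma(X)$ is $\cM_{0,n}$ times an algebraic torus, hence very affine, and its intrinsic torus is identified with the dense torus of $Y_\Gamma^\circ$ via~\cite[Section~6]{HKT}. Since the restriction of $\mathsf{ev}_i$ to this interior is a dominant morphism to the dense torus of $E$, Tevelev's universal property~\cite[Section~3.1]{Tev07} supplies the factorization through $Y_\Gamma^\circ$ directly.

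The error in your argument is that no toric morphism $\widetilde{\mathsf e}_i:Y_\Gamma\to X(\Delta_n)$ can restrict to $\mathsf{ev}_i$ when the $i$-th marked point has nontrivial contact. A toric morphism must send the dense torus of $Y_\Gamma$ into the dense torus of $X$. But the interior of $\cK_\Gamma(X)$ lies in the dense torus of $Y_\Gamma$ (since the stratifications match), and $\mathsf{ev}_i$ carries this interior into $E$, which is a proper boundary stratum of $X$. These two facts are incompatible. Your second step then attempts to force the image into $E$, but the same observation makes that step self-contradictory: a toric morphism to $X$ cannot send a dense-torus point into $\partial X$. The lattice map you describe, recording the position in $N_\RR$ of the vertex carrying the $i$-th leg, is a map $N_{Y_\Gamma}\to N_X$, but this is not the tropicalization of $\mathsf{ev}_i$.

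Your route can be salvaged by targeting $E$ from the outset. The correct combinatorial shadow of $\mathsf{ev}_i$ is the map $N_{Y_\Gamma}\to N_X/N_\tau=N_E$, where $\tau\subset\Delta_n$ is the cone determined by the contact orders at $p_i$; this is the position of the attaching vertex \emph{modulo} the direction of the leg. One then checks that cones of the fan of $Y_\Gamma^\circ$ map into cones of the fan of $E$, which yields the toric morphism $\mathsf e_i:Y_\Gamma^\circ\to E$ directly and makes your second step unnecessary. With this correction, your argument is a legitimate and somewhat more explicit alternative to the paper's intrinsic-torus approach, though it requires extracting more from~\cite{R15b} than the paper does.
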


\begin{proof}
We recall that the \textit{intrinsic torus} of a very affine variety is the algebraic torus associated to the abelian group of global invertible functions up to constant functions~\cite[Section~3]{Tev07}. The dense torus in the toric variety $Y^\circ_\Gamma$ coincides with the intrinsic torus of the interior of $\cK_\Gamma(X)$. To see this, note that this interior is the product of $\cM_{0,n}$ with an algebraic torus, and the intrinsic torus of $\cM_{0,n}$ is the dense torus in $Y_{0,n}$ by~\cite[Section~6]{HKT}. Since the morphism $\mathsf{ev}_i$ is a dominant map of very affine varieties, we obtain the claimed factorization by~\cite[Section~3.1]{Tev07}.
\end{proof}

As a result, every evaluation class has a toric precursor, as we now record. 

\begin{lemma}
Let $E$ be a stratum of $X$ as above and fix a cohomology class $\gamma$ receiving an evaluation map from $\cK_\Gamma(X)$ along the marked point $p_i$. There is a cohomology class on $Y_\Gamma$ whose image under the pullback along
\[
\cK_\Gamma(X)\hookrightarrow Y_\Gamma
\]
is equal to the evaluation class $\mathsf{ev}_i^\star(\gamma)$. 
\end{lemma}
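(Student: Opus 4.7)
The plan is to assemble the claim directly from the preceding lemma together with the surjectivity of pullback along the open torus-invariant inclusion $Y_\Gamma^\circ \hookrightarrow Y_\Gamma$. The preceding lemma supplies an equivariant toric morphism $\mathsf e_i : Y_\Gamma^\circ \to E$ together with a factorization $\mathsf{ev}_i = \mathsf e_i \circ (\cK_\Gamma(X) \hookrightarrow Y_\Gamma^\circ)$. My skeleton is therefore: pull $\gamma$ back along $\mathsf e_i$ to get a class on $Y_\Gamma^\circ$; extend it across the open inclusion into $Y_\Gamma$; then verify the extension restricts to $\mathsf{ev}_i^\star \gamma$ on $\cK_\Gamma(X)$ by contravariant functoriality.

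Concretely, the first step is to form $\mathsf e_i^\star \gamma \in A^\star(Y_\Gamma^\circ)$, which requires nothing beyond the lemma above. The second step is the only one that needs an argument. The complement $Y_\Gamma \setminus Y_\Gamma^\circ$ is a union of closed torus strata in the smooth toric variety $Y_\Gamma$, so the excision sequence gives a surjection $A_\star(Y_\Gamma) \twoheadrightarrow A_\star(Y_\Gamma^\circ)$. Since $Y_\Gamma$ and $Y_\Gamma^\circ$ are both smooth of the same dimension, Poincar\'e duality converts this into a surjection $A^\star(Y_\Gamma) \twoheadrightarrow A^\star(Y_\Gamma^\circ)$. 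Equivalently, one can use the Fulton--Sturmfels identification of Chow cohomology of $Y_\Gamma$ with Minkowski weights on its fan, together with the observation that restricting such a weight to the subfan corresponding to $Y_\Gamma^\circ$ is surjective. Either version produces a lift $\underline\gamma \in A^\star(Y_\Gamma)$ with $\underline\gamma|_{Y_\Gamma^\circ} = \mathsf e_i^\star \gamma$.

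The final step is then pure diagram chase: contravariant functoriality along the chain
\[
\cK_\Gamma(X) \hookrightarrow Y_\Gamma^\circ \hookrightarrow Y_\Gamma
\]
combined with the preceding lemma gives
\[
\underline\gamma|_{\cK_\Gamma(X)} \;=\; (\mathsf e_i^\star \gamma)|_{\cK_\Gamma(X)} \;=\; \mathsf{ev}_i^\star \gamma,
\]
which is the claim. I do not anticipate a serious obstacle; the argument is a direct assembly. The only moderately delicate point is the surjectivity of the restriction map, and in the smooth toric setting this is standard, either via excision plus Poincar\'e duality or via the Minkowski weight description emphasized earlier in the section.
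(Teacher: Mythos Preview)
Your proposal is correct and matches the paper's proof essentially line for line: the paper also invokes the previous lemma to produce a class on $Y_\Gamma^\circ$ restricting to $\mathsf{ev}_i^\star(\gamma)$, and then appeals to the excision sequence for the open embedding $Y_\Gamma^\circ \hookrightarrow Y_\Gamma$ to lift it to $Y_\Gamma$. Your version simply spells out the excision step in more detail (via Poincar\'e duality or the Minkowski weight description), whereas the paper leaves it as a one-line invocation.
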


\begin{proof}
By applying the previous lemma, there is a cohomology class on $Y_\Gamma^\circ$ that pulls back to $\mathsf{ev}_i^\star(\gamma)$. The map $Y_\Gamma^\circ\hookrightarrow Y_\Gamma$ is an open embedding, so a precursor is guaranteed by the excision sequence. 
\end{proof}

\subsubsection{Proof of Theorem~\ref{thm: unique-reconstruction}} We begin with the Gromov--Witten invariant 
\[
\langle \tau_{k_1}(\gamma_1),\ldots,\tau_{k_n}(\gamma_n)\rangle := \int_{[\cK_\Gamma(W)]^{\mathsf{vir}}} \prod_{i=1}^n \psi_i^{k_i} \mathsf{ev}_i^\star \gamma_i. 
\]
By the projection formula, the invariant can be computed on the toric variety $Y_\Gamma$, provided that there exist classes in $A^\star(Y_\Gamma)$ that pull back to the integrand under the embedding 
\[
\cK_\Gamma(W)\hookrightarrow Y_\Gamma. 
\]
These are guaranteed by the preceding three lemmas, and we conclude.

\qed

\subsection{Support of the virtual weight} The class $c_\Gamma(M)$ is a Minkowski weight on the fan $Y_\Gamma$. We will denote the fan of $Y_\Gamma$ by $\Sigma_\Gamma$. Define its support $|c_\Gamma(M)|$ to be the set of cones in $\Sigma_\Gamma$ where the associated weight is nonzero. The support is tightly constrained, as we now explain. 

The moduli space $\cK_\Gamma(X)$ is logarithmically smooth, has an associated tropicalization which we denote $\mathsf T_\Gamma(\Delta_n)$. A point of the tropical moduli space parameterizes maps
\[
F: \plC\to \Delta_n
\]
where $\plC$ is a tropical rational curve and $F$ is an integer affine map satisfying the balancing condition. A detailed description is found in~\cite[Section~3.1]{R15b}. The subvariety $W\hookrightarrow X$ determines an open subscheme $U\hookrightarrow X$ consisting of the union of all locally closed torus orbits intersecting $W$. The equivariant inclusion $U\hookrightarrow X$ determines an inclusion of subcomplex $\Delta_M\hookrightarrow \Delta_n$, identifying the Bergman fan of $M$ with a subcomplex of the permutohedral fan. 

The tropical moduli space $\mathsf{T}_\Gamma(\Delta_n)$ contains a subset $\mathsf{T}_\Gamma(\Delta_M)$ of tropical maps whose image is contained in $\Delta_M$. 

\begin{lemma}
The subset $\mathsf{T}_\Gamma(\Delta_M)\subset \mathsf{T}_\Gamma(\Delta_n)$ is a union of cones. 
\end{lemma}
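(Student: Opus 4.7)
The plan is to use the combinatorial cone structure of $\mathsf{T}_\Gamma(\Delta_n)$ to show that membership in $\mathsf{T}_\Gamma(\Delta_M)$ is a property of the combinatorial type, so that $\mathsf{T}_\Gamma(\Delta_M)$ is a union of closed cones.

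First I would recall, following the description in~\cite[Section~3.1]{R15b}, that the polyhedral structure on $\mathsf{T}_\Gamma(\Delta_n)$ is given as follows: a cone $\tau$ of $\mathsf{T}_\Gamma(\Delta_n)$ corresponds to a combinatorial type $\Theta$ of tropical map, which records (i) the dual graph of $\plC$, (ii) for every vertex $v$ of $\plC$, a cone $\sigma_v$ of $\Delta_n$ whose relative interior contains $F(v)$, (iii) for every edge or leg $e$, a cone $\sigma_e$ of $\Delta_n$ whose relative interior contains the image of the relative interior of $e$, and (iv) the slopes of $F$ along the edges. The open cone of $\tau$ parametrizes all maps with this fixed combinatorial type, and passing to a face corresponds to degenerations (collapsing edges, or moving images to boundary faces).

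Next I would observe the key equivalence. Because $\Delta_M \subset \Delta_n$ is a subfan, the image of a tropical map $F$ is contained in $\Delta_M$ if and only if every $\sigma_v$ and every $\sigma_e$ in the combinatorial type of $F$ is a cone of $\Delta_M$. One direction is immediate; for the other, each vertex image lies in the relative interior of its assigned cone, and the image of each edge is the convex hull of the vertex images translated along integer slopes inside the cone $\sigma_e$, so if all these ambient cones belong to $\Delta_M$, then so does the whole image. In particular, the property ``$F$ factors through $\Delta_M$'' depends only on the combinatorial type $\Theta$, hence only on the cone $\tau$ of $\mathsf{T}_\Gamma(\Delta_n)$ in which $F$ lies.

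Finally I would check closedness: if $\tau$ is a cone such that all combinatorial types appearing in its open cone satisfy the property, then the same holds for every face of $\tau$, since face maps only collapse edges or move images to sub-cones $\sigma' \subset \sigma_v$ (respectively $\sigma_e$), and sub-cones of cones in $\Delta_M$ are again in $\Delta_M$. Thus $\mathsf{T}_\Gamma(\Delta_M)$ is the union of those closed cones $\tau$ of $\mathsf{T}_\Gamma(\Delta_n)$ whose combinatorial type has all $\sigma_v, \sigma_e \in \Delta_M$. The only subtle point is calibrating the definition of the cone structure so that the combinatorial type really does determine the ambient cone of each edge image; once that is in hand the statement is essentially tautological.
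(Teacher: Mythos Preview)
Your proof is correct and follows essentially the same approach as the paper: both arguments invoke the description of cones of $\mathsf{T}_\Gamma(\Delta_n)$ from~\cite{R15b} as being indexed by combinatorial types recording, in particular, the cones of $\Delta_n$ receiving each face of the source curve, and then observe that the image lies in $\Delta_M$ precisely when all these assigned cones belong to the subfan $\Delta_M$. Your version is more explicit about the closedness under faces, but the underlying idea is identical.
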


\begin{proof}
The cone complex $\Delta_M$ is a union of cones in $\Delta_n$. By~\cite[Section~3.2]{R15b}, the cones of $ \mathsf{T}_\Gamma(\Delta_n)$ are determined by the specification of cones of $\Delta_n$ to which each face of the source tropical curve maps. The condition that a moduli point of $\mathsf{T}_\Gamma(\Delta_n)$ lies in $\mathsf{T}_\Gamma(\Delta_M)$ is then precisely the condition that each such cone is contained in $\Delta_M$. This determines a subcomplex of $\mathsf{T}_\Gamma(\Delta_n)$ as claimed. 
\end{proof}

We can now state the support constraint on $c_\Gamma(M)$. 

\begin{proposition}
If $M$ is realized by a hyperplane arrangement over the complex numbers, then the support $|c_\Gamma(M)|$ is contained in $\mathsf{T}_\Gamma(\Delta_M)$.
\end{proposition}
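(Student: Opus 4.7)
The plan is to show that for every cone $\sigma$ of $\Sigma_\Gamma$ not in $\mathsf{T}_\Gamma(\Delta_M)$, the Minkowski weight $c_\Gamma(M)$ assigns zero to $\sigma$. Choose a smooth toric modification $\tilde{\pi}\colon Y_\Gamma'\to Y_\Gamma$ fine enough that $c_\Gamma(M)$ is represented on $Y_\Gamma'$ by the Poincar\'e dual of $j'_\star[\cK_\Gamma(W)']^{\mathsf{vir}}$, where $j'\colon \cK_\Gamma(W)'\hookrightarrow Y_\Gamma'$ is the composition of the strict embedding $\cK_\Gamma(W)'\hookrightarrow\cK_\Gamma(X)'$ with the toric embedding. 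Because pullback of Minkowski weights along $\tilde{\pi}$ records, on each cone of $\Sigma_\Gamma'$, the value of the original weight on the smallest cone of $\Sigma_\Gamma$ containing it, it suffices, after choosing any codimension-$k$ cone $\sigma'\in\Sigma_\Gamma'$ with $(\sigma')^\circ\subset\sigma^\circ$ and $k=\dim\sigma$, to show that $\deg\bigl(j'_\star[\cK_\Gamma(W)']^{\mathsf{vir}}\cap[V(\sigma')]\bigr)=0$.

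The geometric core of the argument is the claim that $V(\sigma)\cap j(\cK_\Gamma(W))=\emptyset$ in $Y_\Gamma$, where $j$ is the unrefined analogue of $j'$. If $\sigma\notin\mathsf{T}_\Gamma(\Delta_n)$, then $V(\sigma)\cap\cK_\Gamma(X)$ is already empty, since the cones of $\mathsf{T}_\Gamma(\Delta_n)$ index the tropical type strata of $\cK_\Gamma(X)$ via the strict closed embedding $\cK_\Gamma(X)\hookrightarrow Y_\Gamma$. Otherwise $\sigma\in\mathsf{T}_\Gamma(\Delta_n)\setminus\mathsf{T}_\Gamma(\Delta_M)$; because $\mathsf{T}_\Gamma(\Delta_M)$ is a union of cones and hence closed under taking faces, no cone $\tau$ of $\mathsf{T}_\Gamma(\Delta_n)$ that has $\sigma$ as a face can lie in $\mathsf{T}_\Gamma(\Delta_M)$. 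The intersection $V(\sigma)\cap\cK_\Gamma(X)$ is the union of the tropical type strata $\cK_\Gamma(X)_\tau$ for $\tau\supset\sigma$, and every such stratum parameterizes log stable maps whose tropical type sends some vertex to a cone $\sigma_v\in\Delta_n\setminus\Delta_M$. The corresponding irreducible component of the source curve then maps generically into the open torus orbit $O_{\sigma_v}^\circ$, which is disjoint from $W$ since $\sigma_v\notin\Delta_M$; so no such map factors through $W$, and the stratum misses $\cK_\Gamma(W)$.

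To conclude, the cartesian square defining $\cK_\Gamma(W)'$ as $\cK_\Gamma(X)'\times_{\cK_\Gamma(X)}\cK_\Gamma(W)$ places the image of $\cK_\Gamma(W)'$ in $Y_\Gamma'$ inside $\tilde{\pi}^{-1}$ of the image of $\cK_\Gamma(W)$ in $Y_\Gamma$. Combined with $\tilde{\pi}(V(\sigma'))=V(\sigma)$ for our chosen $\sigma'$, the emptiness established in $Y_\Gamma$ promotes to $V(\sigma')\cap j'(\cK_\Gamma(W)')=\emptyset$. The cycle $j'_\star[\cK_\Gamma(W)']^{\mathsf{vir}}$ is thus supported on a closed subscheme of $Y_\Gamma'$ disjoint from $V(\sigma')$, so its intersection with $[V(\sigma')]$ vanishes in the Chow group of $Y_\Gamma'$, yielding $c_\Gamma(M)(\sigma)=0$.

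The hard part is the middle paragraph: verifying that a log stable map whose tropical type specifies a vertex outside $\Delta_M$ cannot factor through $W$. This rests on the explicit dictionary between tropical types and the torus-orbit image data of individual components of a log stable map, as in \cite{R15b} and recalled in Section~\ref{sec: virtual-mw}, together with the fact that the torus orbit $O_\sigma^\circ\subset X$ meets the wonderful model $W$ precisely when $\sigma\in\Delta_M$.
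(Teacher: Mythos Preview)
Your proposal is correct and follows essentially the same approach as the paper: both show that any stratum of $Y_\Gamma$ indexed by a cone outside $\mathsf{T}_\Gamma(\Delta_M)$ is disjoint from $\cK_\Gamma(W)$, so intersects the pushed-forward virtual class trivially. The paper streamlines your middle paragraph by introducing the open toric subvariety $U\subset X$ with fan $\Delta_M$ and the factorization $\cK_\Gamma(W)\hookrightarrow\cK_\Gamma(U)\hookrightarrow\cK_\Gamma(X)\hookrightarrow Y_\Gamma$, then observing in one line that the strata meeting $\cK_\Gamma(U)$ are precisely those indexed by $\mathsf{T}_\Gamma(\Delta_M)$; this replaces your vertex-by-vertex analysis of the tropical type, but the geometric content is identical.
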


\begin{proof}
Let $U\subset X$ be the open toric variety determined by the union of the cones of $\Delta_n$ that are contained in $\Delta_M$. Let $\cK_\Gamma(U)$ be the open substack of $\cK_\Gamma(X)$ parameterizing those logarithmic maps whose image is contained in $U$. If $W$ is the wonderful model of any realization of $M$, then $W\subset U$, and consequently $\cK_\Gamma(U)$ contains $\cK_\Gamma(W)$. We now compute the virtual weight $c_\Gamma(M)$ by the projection formula. Specifically, we must determine intersections of the boundary strata of the toric variety $Y_\Gamma$ with the virtual fundamental class $[\cK_\Gamma(W)]^{\mathsf{vir}}$. By the projection formula, the weight is determined as follows. We choose strata of $Y_\Gamma$ whose codimension is equal to the virtual dimension of $\cK_\Gamma(W)$. The pullback of this stratum is a stratum that may have positive dimension, but comes equipped with a virtual fundamental class in homological degree $0$. The degrees of these strata virtual fundamental classes, across all such strata of $Y_\Gamma$, determine the virtual Minkowski weight. 

However, as argued above, we have a factorization
\[
\cK_\Gamma(W)\hookrightarrow \cK_\Gamma(U)\hookrightarrow \cK_\Gamma(X)\hookrightarrow Y_\Gamma. 
\]
Moreover, the maps are all strict, and the stratification on $Y_\Gamma$ pulls back to the induced logarithmic stratification on each space mapping to it. It follows immediately that if we are given a stratum of $Y_\Gamma$ that does not meet $\cK_\Gamma(U)$ then the Minkowski weight on this cone must be zero. However, the strata of $Y_\Gamma$ that meet $\cK_\Gamma(U)$ are precisely those corresponding to cones in $\mathsf T_\Gamma(\Delta_M)$. The result follows.  
\end{proof}

\subsection{First examples}\label{sec: examples} The computation of Gromov--Witten invariants is a famously difficult problem, and the virtual Minkowski weights are similarly difficult to compute. In what follows, we sketch some simple examples that give a sense of the behaviour of the virtual Minkowski weights. 

The first example illustrates the relationship between our results here and traditional correspondence theorems~\cite{Gro14,Gro15,Mi03,NS06,R15b}.

\begin{example}
If $M$ is the matroid associated to an arrangement of $n+1$ generic hyperplanes in $\mathbb P^n$, then $W$ coincides with $X$, namely the toric variety of the permutohedron. In this case, the moduli space $\cK_\Gamma(W)$ is logarithmically unobstructed, and it is the closure of its interior. Moreover, this interior is isomorphic to $\cM_{0,n}$ times an algebraic torus~\cite{R19b}. In this case, the weight $c_\Gamma(M)$ is represented by the balanced polyhedral complex $T_\Gamma(\Delta_n)$ with weight $1$ on all maximal cones. 
\end{example}

We next show that the virtual fundamental class construction detects a nontrivial subcomplex of $\mathsf T_\Gamma(\Delta_M)$. 

\begin{example}
Let $M$ be the matroid associated to the arrangement of $3$ points on $\mathbb P^1$. Label the boundary divisor $3$ points as $q_1,q_2,q_3$ in $\mathbb P^1$. The Bergman fan $\Delta_M$ coincides with the $1$-skeleton of the fan of $\PP^2$, the ``traditional'' tropical line in the plane. We choose the discrete data $\Gamma$ to have $6$ marked points $p_{11},p_{12},p_{21},p_{22},p_{31},p_{32}$. We choose contact orders such that $p_{ij}$ has contact order $1$ with $q_i$, for all $i$ and $j$. 

The cone complex $T_\Gamma(\Delta_M)$ contains a $3$-dimensional cone, parameterizing maps
\[
\plC\to \Delta_M
\]
where $\plC$ is a stable $6$-pointed trivalent tropical curve with $3$ bounded edges. The map is uniquely specified by the balancing condition. However, the Minkowski weight $c_\Gamma(M)$ in this case is a $2$-dimensional balanced subcomplex of $T_\Gamma(\Delta_M)$. In fact, an explicit computation shows that $c_\Gamma(M)$ is simply the union of the $2$-dimensional cones in this case. 
\end{example}

The point of the previous example is that the combinatorial invariant $c_\Gamma(M)$ involves some subtlety. For example, the naive generalization of our first example would suggest that the virtual fundamental class is always equal to $T_\Gamma(\Delta_M)$ with weights equal to $1$ which is obviously a combinatorial invariant. The above example shows that this cannot be the case, indeed, the complex $T_\Gamma(\Delta_M)$ is not even equidimensional!

In fact, if the matroid is allowed to be of higher rank the phenomenon is already visible in degree $1$. 

\begin{example}
This example is adapted from work of Lamboglia~\cite{Lam18}. In this work, it is shown that if $M$ is the matroid associated to a generic $2$-dimensional plane in $\mathbb P^n$ for $n\geq 5$, the tropical Fano variety is not pure dimensional~\cite[Section~3]{Lam18}. Note that $\Delta_M$ is the $2$-skeleton of the fan of $\mathbb P^n$. The tropical Fano variety coincides with the space $T_\Gamma(\Delta_M)$ with the discrete data $\Gamma$ being that of a generic line. That is, there are $n+1$ marked points, with each meeting precisely one of the $n+1$ lines determined by the intersection of $\mathbb P^2$ with the boundary of $\mathbb P^n$. The weight $c_\Gamma(M)$ is supported on the $2$-dimensional cones of $T_\Gamma(\Delta_M)$. 
\end{example}

Additional higher dimensional examples can be computed by means of products of rank $2$ matroids, using the product formula in~\cite{R19b}. 

\begin{example}
Let $M'$ be any complex realizable matroid, and let $M$ be the direct sum of this matroid with the arrangement of $2$ points on $\mathbb P^1$. In this case, $\Delta_M$ is identified with $\Delta_{M'}\times\RR$. Let $W'$ be a wonderful realization of $M'$. We may take $W$ to be $W'\times \PP^1$. Then for any contact orders $\Gamma$, we obtain a morphism of moduli spaces
\[
\cK_\Gamma(W)\to \cK_\Gamma(W').
\]
The relative logarithmic tangent bundle of $W$ over $W'$ is trivial, and since we work in genus $0$, this map is logarithmically smooth. Since the logarithmic obstruction theory is compatible with logarithmically smooth morphism, it follows that the virtual Minkowski weight $c_\Gamma(M)$ is canonically identified with the preimage of $c_\Gamma(M')$ under the linear projection
\[
T_\Gamma(\Delta_{M})\to T_\Gamma(\Delta_M'). 
\]
Note that when $\Delta_{M'}$ in the previous example is a point, we recover the results of~\cite{CMR14b,R15b}. 
\end{example}

There is an in-principle way to determine the weight $c_\Gamma(M)$. We record it for use in future work. 

\begin{remark}\label{rem: computation}
If $M$ is realizable, the weight $c_\Gamma(M)$ may be computed as follows. For each cone of $T_\Gamma(\Delta_M)$ with dimension equal to the virtual dimension, we obtain a closed substack of $\cK_\Gamma(W)$. The substack possesses a virtual fundamental class in homological dimension $0$ by construction. Its degree is a rational number which is the value of $c_\Gamma(\Delta_M)$ on this cone. The number can in principle be calculated by the degeneration formula~\cite{R19b} or by Grothendieck--Riemann--Roch, which reduces the calculation to a tautological integral on the space of maps to a toric variety. 
\end{remark}

\section{Questions}\label{sec: questions}

We collect a number of questions suggesting future development of the directions here. 

In order to define Gromov--Witten invariants, it is necessary to intersect the matroid virtual class, which is a class in the Chow group of $\mathsf K_\Gamma(X(\Delta_n))$, with cohomology classes in the Chow ring of $M$. We recall that this Chow ring is identified with the Chow ring $A^\star(U_M)$ for $U_M\hookrightarrow X(\Delta_n)$ the torus invariant open determined by the Bergman fan of $M$ inside $\Delta_n$. Since we only have access to pulling back cohomology classes from $X(\Delta_n)$ and its strata, we must first choose a lift of the relevant classes along the surjection
\[
A^\star(X(\Delta_n))\to A^\star(U_M)
\]
induced by flat pullback along $U_M\hookrightarrow X(\Delta_n)$. The following question asks whether the resulting invariants are well-defined, i.e. if they are independent of the lift. 

\begin{questions}
Let $M$ be a matroid on $\{0, \dots, n\}$, and let $\gamma_1, \dots, \gamma_m \in A^\star(X(\Delta_n))$ be such that $\gamma_i{|_{U_M}} = 0$ for some $i \in \{1, \ldots, m\}$. Do we have
\[
\mathsf{deg}	\left(\mathsf{ev}_1^\star(\gamma_1) \cup \dots \cup \mathsf{ev}_m^\star(\gamma_m)\right) \cap [M]^{\mathsf{vir}}_\Gamma = 0
\]
for all choices of numerical data $\Gamma$?
\end{questions}

A positive answer to this question would provide a definition of Gromov--Witten invariants of an arbitrary matroid $M$ in genus $0$, and produce a quantum deformation of the Chow ring of $M$ studied in~\cite{FY04}. 

In the penultimate section, we constructed Minkowski weights $c_\Gamma(M)$ associated to any matroid. In the realizable case, we were able to show that the support of $c_\Gamma(M)$ was contained in the cone complex $T_\Gamma(\Delta_M)$. 

\begin{questions}
Is the support of the virtual Minkowski weight $c_\Gamma(M)$ contained in $T_\Gamma(\Delta_M)$?
\end{questions}

At present, we have few tools to calculate the virtual Minkowski weights associated to a matroid. Invariants of matroids, such as the Tutte polynomial and characteristic polynomial, often satisfy good properties with respect to deletion and contraction operations. 

\begin{questions}
What is the behavior of the virtual Minkowski weights $c_\Gamma(M)$ under deletion/contraction of $M$ at an element of the ground set?
\end{questions}

We expect that the degeneration formula may shed some light on this question~\cite{R19}. 

Another basic question is to follow the ideas of this paper, but in higher genus. 

\begin{questions}
Is the genus $g$ logarithmic Gromov--Witten theory of the wonderful model of a complex arrangement complement a combinatorial invariant? If so, can a higher genus Gromov--Witten theory for a general matroid be defined?
\end{questions}

The obstacle here is that virtual fundamental classes in higher genus are not easily expressible via Chern class operations on the space of maps to $X(\Delta_n)$. However, in genus $1$, the reduced logarithmic Gromov--Witten theory developed in~\cite{RSW17B} may provide an avenue of access.

\bibliographystyle{siam} 
\bibliography{MatroidGW}

\begin{thebibliography}{10}

\bibitem{AC11}
{\sc D.~Abramovich and Q.~Chen}, {\em Stable logarithmic maps to
  {D}eligne-{F}altings pairs {II}}, Asian J. Math., 18 (2014), pp.~465--488.

\bibitem{AW}
{\sc D.~Abramovich and J.~Wise}, {\em {Birational invariance in logarithmic
  Gromov--Witten theory}}, Comp. Math., 154 (2018), pp.~595--620.

\bibitem{AHK18}
{\sc K.~Adiprasito, J.~Huh, and E.~Katz}, {\em Hodge theory for combinatorial
  geometries}, Ann. of Math. (2), 188 (2018), pp.~381--452.

\bibitem{Bar18}
{\sc L.~J. Barrott}, {\em {Logarithmic Chow theory}}, arXiv:1810.03746,
  (2018).

\bibitem{BEST}
{\sc A.~Berget, C.~Eur, H.~Spink, and A.~Tseng}, {\em Tautological bundles of
  matroids}, arXiv:2103.08021,  (2021).

\bibitem{BEZ}
{\sc M.~Brandt, C.~Eur, and L.~Zhang}, {\em Tropical flag varieties}, Adv.
  Math., 384 (2021), p.~107695.

\bibitem{BK05}
{\sc J.~Bryan and D.~Karp}, {\em The closed topological vertex via the
  {C}remona transform}, J. Algebraic Geom., 14 (2005), pp.~529--542.

\bibitem{CMR14b}
{\sc R.~Cavalieri, H.~Markwig, and D.~Ranganathan}, {\em {Tropical
  compactification and the Gromov--Witten theory of $\mathbf P^1$}}, Selecta
  Math., 23 (2017), pp.~1027--1060.

\bibitem{Che10}
{\sc Q.~Chen}, {\em Stable logarithmic maps to {D}eligne-{F}altings pairs {I}},
  Ann. of Math., 180 (2014), pp.~341--392.

\bibitem{CG07}
{\sc T.~Coates and A.~Givental}, {\em Quantum {Riemann-Roch, Lefschetz, and
  Serre}}, Ann. of Math.,  (2007), pp.~15--53.

\bibitem{Cos06}
{\sc K.~Costello}, {\em {Higher genus Gromov-Witten invariants as genus zero
  invariants of symmetric products}}, Ann. of Math.,  (2006), pp.~561--601.

\bibitem{DR21}
{\sc J.~Dastidar and D.~Ross}, {\em Matroid psi classes}, arXiv:2102.08425,
  (2021).

\bibitem{dCP95}
{\sc C.~De~Concini and C.~Procesi}, {\em Wonderful models of subspace
  arrangements}, Selecta Math. (N.S.), 1 (1995), pp.~459--494.

\bibitem{dMRS}
{\sc L.~L. de~Medrano, F.~Rinc{\'o}n, and K.~Shaw}, {\em
  {Chern--Schwartz--MacPherson cycles of matroids}}, Proc. Lond. Math. Soc.,
  120 (2020), pp.~1--27.

\bibitem{FY04}
{\sc E.~M. Feichtner and S.~Yuzvinsky}, {\em Chow rings of toric varieties
  defined by atomic lattices}, Invent. Math., 155 (2004), pp.~515--536.

\bibitem{FS12}
{\sc A.~Fink and D.~E. Speyer}, {\em {$K$}-classes for matroids and equivariant
  localization}, Duke Math. J., 161 (2012), pp.~2699--2723.

\bibitem{Ful98}
{\sc W.~Fulton}, {\em Intersection theory}, vol.~2 of Ergebnisse der Mathematik
  und ihrer Grenzgebiete. 3. Folge. A Series of Modern Surveys in Mathematics
  [Results in Mathematics and Related Areas. 3rd Series. A Series of Modern
  Surveys in Mathematics], Springer-Verlag, Berlin, second~ed., 1998.

\bibitem{FS97}
{\sc W.~Fulton and B.~Sturmfels}, {\em Intersection theory on toric varieties},
  Topology, 36 (1997), pp.~335--353.

\bibitem{GMO17}
{\sc A.~Gathmann, H.~Markwig, and D.~Ochse}, {\em Tropical moduli spaces of
  stable maps to a curve}, in Algorithmic and experimental methods in algebra,
  geometry, and number theory, Springer, Cham, 2017, pp.~287--309.

\bibitem{GO17}
{\sc A.~Gathmann and D.~Ochse}, {\em Moduli spaces of curves in tropical
  varieties}, in Algorithmic and experimental methods in algebra, geometry, and
  number theory, Springer, Cham, 2017, pp.~253--286.

\bibitem{GKP14}
{\sc A.~Gholampour, D.~Karp, and S.~Payne}, {\em {Cremona symmetry in
  Gromov-Witten theory}}, Pro Mathematica, 29 (2016), pp.~129--149.

\bibitem{Gro14}
{\sc A.~Gross}, {\em Correspondence theorems via tropicalizations of moduli
  spaces}, Commun. Contemp. Math., 18 (2016), pp.~1550043, 36.

\bibitem{Gro15}
\leavevmode\vrule height 2pt depth -1.6pt width 23pt, {\em Intersection theory
  on tropicalizations of toroidal embeddings}, Proc. Lond. Math. Soc. (3), 116
  (2018), pp.~1365--1405.

\bibitem{GS13}
{\sc M.~Gross and B.~Siebert}, {\em {Logarithmic Gromov-Witten invariants}}, J.
  Amer. Math. Soc., 26 (2013), pp.~451--510.

\bibitem{HKT}
{\sc P.~Hacking, S.~Keel, and J.~Tevelev}, {\em {Stable pair, tropical, and log
  canonical compactifications of moduli spaces of del Pezzo surfaces}}, Invent.
  Math., 178 (2009), pp.~173--227.

\bibitem{HW21}
{\sc L.~Herr and J.~Wise}, {\em Costello's pushforward formula: errata and
  generalization}, arXiv:2103.10348,  (2021).

\bibitem{JMRS}
{\sc P.~Jell, H.~Markwig, F.~Rinc{\'o}n, and B.~Schr{\"o}ter}, {\em Tropical
  lines in planes and beyond}, arXiv:2003.02660,  (2020).

\bibitem{Kap93}
{\sc M.~M. {Kapranov}}, {\em {Chow quotients of Grassmannians. I}}, in I. M.
  Gelfand seminar. Part 2: Papers of the Gelfand seminar in functional analysis
  held at Moscow University, Russia, September 1993, Providence, RI: American
  Mathematical Society, 1993, pp.~29--110.

\bibitem{KR16}
{\sc D.~Karp and D.~Ranganathan}, {\em {Gromov--Witten theory of
  $\mathbb{P}^1\times\mathbb{P}^1\times\mathbb{P}^1$}}, J. Pure Appl. Algebra,
  220 (2016), pp.~3000--3009.

\bibitem{Katz09}
{\sc E.~Katz}, {\em Tropical intersection theory from toric varieties},
  Collect. Math., 63 (2012), pp.~29--44.

\bibitem{Kat16}
\leavevmode\vrule height 2pt depth -1.6pt width 23pt, {\em Matroid theory for
  algebraic geometers}, in Nonarchimedean and tropical geometry, Simons Symp.,
  Springer, [Cham], 2016, pp.~435--517.

\bibitem{KP08}
{\sc E.~Katz and S.~Payne}, {\em Piecewise polynomials, minkowski weights, and
  localization on toric varieties}, Algebra Number Theory, 2 (2008),
  pp.~135--155.

\bibitem{KKP}
{\sc B.~Kim, A.~Kresch, and T.~Pantev}, {\em Functoriality in intersection
  theory and a conjecture of {C}ox, {K}atz, and {L}ee}, J. Pure Appl. Algebra,
  179 (2003), pp.~127--136.

\bibitem{Lam18}
{\sc S.~Lamboglia}, {\em Tropical {Fano} schemes}, arXiv:1807.06283,  (2018).

\bibitem{Mano12}
{\sc C.~Manolache}, {\em Virtual pull-backs}, J. Algebr. Geom., 21 (2012),
  pp.~201--245.

\bibitem{Mi03}
{\sc G.~Mikhalkin}, {\em Enumerative tropical geometry in {${\mathbb{R}^2}$}},
  J. Amer. Math. Soc, 18 (2005), pp.~313--377.

\bibitem{MR21}
{\sc S.~Molcho and D.~Ranganathan}, {\em A case study of intersections on
  blowups of the moduli of curves}, arXiv:2106.15194,  (2021).

\bibitem{NS06}
{\sc T.~Nishinou and B.~Siebert}, {\em Toric degenerations of toric varieties
  and tropical curves}, Duke Math. J., 135 (2006), pp.~1--51.

\bibitem{R15b}
{\sc D.~Ranganathan}, {\em {Skeletons of stable maps I: rational curves in
  toric varieties}}, J. Lond. Math. Soc., 95 (2017), pp.~804--832.

\bibitem{R19}
\leavevmode\vrule height 2pt depth -1.6pt width 23pt, {\em {Logarithmic
  Gromov-Witten theory with expansions}}, arXiv:1903.09006,  (2019).

\bibitem{R19b}
\leavevmode\vrule height 2pt depth -1.6pt width 23pt, {\em A note on cycles of
  curves in a product of pairs}, arXiv:1910.00239,  (2019).

\bibitem{RSW17B}
{\sc D.~Ranganathan, K.~Santos-Parker, and J.~Wise}, {\em Moduli of stable maps
  in genus one and logarithmic geometry, {II}}, Algebra Number Theory, 13
  (2019), pp.~1765--1805.

\bibitem{RW19}
{\sc D.~Ranganathan and J.~Wise}, {\em Rational curves in the logarithmic
  multiplicative group}, Proc. Amer. Math. Soc., 148 (2020), pp.~103--110.

\bibitem{Ryb11}
{\sc G.~L. Rybnikov}, {\em On the fundamental group of the complement of a
  complex hyperplane arrangement}, Functional Analysis and Its Applications, 45
  (2011), pp.~137--148.

\bibitem{Spe09}
{\sc D.~E. Speyer}, {\em A matroid invariant via the {$K$}-theory of the
  {G}rassmannian}, Adv. Math., 221 (2009), pp.~882--913.

\bibitem{Tev07}
{\sc J.~Tevelev}, {\em Compactifications of subvarieties of tori}, Amer. J.
  Math., 129 (2007), pp.~1087--1104.

\bibitem{Tot14}
{\sc B.~Totaro}, {\em Chow groups, {C}how cohomology, and linear varieties},
  Forum Math. Sigma, 2 (2014), pp.~Paper No. e17, 25.

\bibitem{Wis16a}
{\sc J.~Wise}, {\em Moduli of morphisms of logarithmic schemes}, {Algebra
  Number Theory}, 10 (2016), pp.~695--735.

\bibitem{Wis16b}
\leavevmode\vrule height 2pt depth -1.6pt width 23pt, {\em Uniqueness of
  minimal morphisms of logarithmic schemes}, Algebr. Geom., 6 (2019),
  pp.~50--63.

\end{thebibliography}

\end{document}